
\documentclass[journal]{IEEEtran}

\makeatletter
\def\ps@headings{%
\def\@oddhead{\mbox{}\scriptsize\rightmark \hfil \thepage}%
\def\@evenhead{\scriptsize\thepage \hfil \leftmark\mbox{}}%
\def\@oddfoot{}%
\def\@evenfoot{}}
\makeatother
\pagestyle{headings}
\usepackage[pdftex]{graphicx}
\usepackage{amsmath}
\usepackage{amssymb}
\usepackage{array}
\usepackage[svgnames]{xcolor} 
\usepackage{mdwmath}
\usepackage{subfigure}
 \usepackage{bm}
\usepackage{tipa}
 \usepackage{hyperref}
 \usepackage{framed}
 \usepackage{booktabs}
 \usepackage{caption}
 \usepackage{url}

\usepackage{algorithm}
\usepackage{algorithmic}
\usepackage{epsfig}
\usepackage{overpic}
\usepackage{epstopdf}
\usepackage{amsfonts,setspace,amsthm,braket}

\graphicspath{{../pdf/}{../jpeg/}}
\DeclareGraphicsExtensions{.pdf,.jpeg,.png}
\DeclareMathOperator*{\argmin}{arg\,min}
\DeclareMathOperator*{\argmax}{arg\,max}
\usepackage{dsfont}
\newtheorem{theorem}{Theorem}
\newtheorem{lemma}{Lemma}
\newtheorem{proposition}[theorem]{Proposition}

\newtheorem{definition}{Definition}

\allowdisplaybreaks

\newcommand{\beqa}{\begin{eqnarray}}
\newcommand{\eeqa}{\end{eqnarray}}

\floatname{algorithm}{Algorithm}
\IEEEoverridecommandlockouts


\begin{document}
\title{Optimal Control for Generalized Network-Flow Problems} 


\author{
\IEEEauthorblockN{Abhishek Sinha, Eytan Modiano\\}
\IEEEauthorblockA{Laboratory for Information and Decision Systems, Massachusetts Institute of Technology, Cambridge, MA 02139}
Email: sinhaa@mit.edu,
modiano@mit.edu


}
\maketitle
\begin{abstract}
We consider the  problem of throughput-optimal packet dissemination, in the presence of an arbitrary mix of unicast, broadcast, multicast and anycast traffic, in a general wireless network. We propose an online dynamic policy, called Universal Max-Weight (UMW), which solves the above problem efficiently. To the best of our knowledge, UMW is the first throughput-optimal algorithm of such versatility in the context of generalized network flow problems.  Conceptually, the UMW policy is derived by relaxing the precedence constraints associated with multi-hop routing, and then solving a min-cost routing and max-weight scheduling problem on a  \emph{virtual network of queues}.
When specialized to the unicast setting, the  UMW policy yields a throughput-optimal \emph{cycle-free routing} and link scheduling policy.   This is in contrast to the well-known throughput-optimal \emph{Back-Pressure} (BP) policy which allows for packet cycling, resulting in excessive latency.  Extensive simulation results show that the proposed UMW policy incurs a substantially smaller delay as compared to the BP policy. The proof of throughput-optimality of the UMW policy combines ideas from stochastic Lyapunov theory with a sample path argument from \emph{adversarial queueing theory} and may be of independent theoretical interest. 
\end{abstract}
\section{Introduction} \label{intro}
The \emph{Generalized Network Flow} problem involves efficient transportation of messages, generated at source node(s), to a set of designated destination node(s) over a multi-hop network. Depending on the number of destination nodes associated with each source node, the problem is known either as \emph{unicast} (single destination node), \emph{broadcast} (all node are destination nodes), \emph{multicast} (some nodes are destination nodes) or \emph{anycast} (several choices for a single destination node). 
Over the last few decades, a tremendous amount of research effort has been directed to address each of the above problems in different networking contexts. However, despite the increasingly diverse mix of internet traffic, to the best of our knowledge, there exists no universal solution to the general problem, only isolated solutions that do not interoperate, and are often suboptimal.  In this paper, we provide the first such universal solution:  A throughput optimal dynamic control policy for the generalized network flow problem.

We start with a brief discussion of   the above networking problems and then survey the relevant literature. 

In the \textbf{Broadcast}  problem, packets generated at a source need to be distributed among all nodes in the network. In the classic paper of Edmonds \cite{edmonds}, the broadcast capacity of a wired network is derived and an algorithm is proposed to compute the maximum number of edge-disjoint spanning trees, which together achieve the maximum broadcast throughput. The  algorithm in \cite{edmonds}  is combinatorial in nature and does not have a wireless counterpart, with associated interference-free edge activations. Following Edmonds' work, a variety of different broadcast algorithms have been proposed in the literature, each one targeted to optimize different metrics such as delay \cite{czumaj2003broadcasting}, energy consumption \cite{widmer2005low} and fault-tolerance \cite{kranakis2001fault}. In the context of optimizing throughput,  \cite{massoulie2007randomized} proposes a randomized broadcast policy, which is optimal for wired networks. However, extending this algorithm to the wireless setting proves to be difficult \cite{towsley2008rate}. The authors of  \cite{Sinha:2016:TMB:2942358.2942390} propose an optimal broadcast algorithm for a general wireless network, albeit with exponential complexity. In a recent series of papers            \cite{sinha2015throughput} \cite{Sinha:2016:TBW:2942358.2942389}, a simple throughput-optimal broadcast algorithm has been proposed for wireless networks with an underlying DAG topology. However, this algorithm does not extend to non-DAG networks. 

The \textbf{Multicast}  problem is a generalization of the broadcast problem, in which the packets generated at a source node needs to be efficiently distributed to a subset of nodes in the network. In its combinatorial version, the multicast problem reduces to finding the maximum number of edge-disjoint trees, spanning the source node and destination nodes. This problem is known as the \emph{Steiner Tree Packing} problem, which is NP-hard \cite{jain2003packing}. Numerous algorithms have been proposed in the literature for solving the multicast problem. In  \cite{swati} \cite{bui2008optimal},  back-pressure type algorithms are proposed for multicasting over wired and wireless networks respectively. These algorithms forward packets over a set of pre-computed distribution trees, and are limited to the throughput obtainable by these trees. Moreover, computing and  maintaining these trees is impractical in  large and  time-varying networks.  We note that because of the need for packet duplications, the Multicast and Broadcast problems do not satisfy standard flow conservation constraints,  and thus the design of throughput-optimal algorithms is non-trivial.

The \textbf{Unicast} problem involves a single source and a single destination. The celebrated Back-Pressure (BP) algorithm \cite{tassiulas} was proposed for the unicast problem. In this algorithm, the routing and scheduling decisions are taken based on local queue length differences. As a result, BP explores all possible paths for routing and usually takes  a long time for convergence, resulting in considerable latency, especially in lightly loaded networks. Subsequently, a number of refinements have been proposed to improve the delay characteristics of the BP algorithm. In \cite{ying2011combining} BP is combined with hop-length based shortest path routing for faster route discovery, and 
 \cite{zargham2013accelerated} proposes a second order algorithm using the Hessian matrix to improve delay.

The \textbf{Anycast} problem involves routing from a single source to \emph{any one} of the several given destinations. Anycast is increasingly used in Content-Distribution Networks (CDNs) with geo-replicated contents  
\cite{sinha_CDN}.  
However, despite its immense commercial importance, to the best of our knowledge, no throughput-optimal algorithm is known for this problem. 

Our proposed solution uses a \emph{virtual network of queues} - one virtual queue per link in the network.  We solve the routing problem dynamically using a simple ``weighted-shortest-route" computation  on the virtual network and using the corresponding route on the physical network.  Optimal link scheduling is performed by a max-weight computation, also in the virtual network, and then using the resulting activation in the physical network.  The overall algorithm is dynamic, cycle-free, and solves the generalized routing and scheduling problem optimally (i.e., maximally stable or throughput optimal).  In addition to this, the proposed \textbf{UMW} policy has the following advantages:
 
\begin{enumerate}
\item \textbf{Generalized Solution:} Unlike the BP policy, which solves only the unicast problem, the proposed \textbf{UMW} policy efficiently addresses all of the aforementioned network flow problems in both wired and wireless networks in a very general setting.  
\item \textbf{Delay Reduction:} Although the celebrated BP policy is throughput-optimal,  its average delay performance is known to be poor due to occurrence of packet-cycling in the network \cite{ying2011combining} \cite{bui2009novel}. In our proposed \textbf{UMW} policy, each packet traverses  a dynamically selected \emph{acyclic} route, which drastically reduces the average latency. 
\item \textbf{State-Complexity Reduction:} Unlike the BP policy, which maintains \emph{per-flow} queues at each node, the proposed \textbf{UMW} policy maintains only a virtual-queue counter and a priority queue per link, irrespective of the number and type of flows in the network. This reduces the amount of overhead that needs to be maintained for efficient operation. 
\item \textbf{Efficient Implementation:} In the BP policy, routing decisions are made hop-by-hop by the intermediate nodes. This puts a considerable amount of computational overhead on the individual nodes. In contrast, in the proposed \textbf{UMW} policy, the entire route of the packets is determined at the source (similar to \emph{dynamic source routing} \cite{johnson1996dynamic}).  Hence, the entire computational requirement is transferred to the source, which often has higher computational/energy resources than the nodes in the rest of the network (\emph{e.g.}, wireless sensor networks). \end{enumerate}
The rest of the paper is organized as follows: In section \ref{system-model-section} we discuss the basic system model and formulate the problem. In section \ref{overview} we give a brief overview of the proposed \textbf{UMW} policy. In section \ref{virtual_queues} we discuss the structure and dynamics of the virtual queues, on which \textbf{UMW} is based. In section \ref{physical_queues} we prove its stability property in the multi-hop physical network. In section \ref{implementation-section} we discuss implementation details. In section \ref{simulation-section} we provide extensive simulation results, comparing \textbf{UMW} with other competing algorithms. In section \ref{conclusion-section} we conclude the paper with a few directions for further research. 

\section{System Model and Problem Formulation} \label{system-model-section}
\subsection{Network Model}
We consider a wireless network with arbitrary topology, represented by the graph $\mathcal{G}(V,E)$. The network consists of $|V|=n$ nodes and $|E|=m$ links. Time is slotted. A link, if activated, can transmit one packet per slot. Due to  wireless interference constraints, only certain subsets of links may be activated together at any slot. The set of all admissible link activations is known as the \emph{activation set} and is denoted by $\mathcal{M} \subseteq 2^E$. We do not impose any restriction on the structure of the activation set $\mathcal{M}$. As an example, in the case of \emph{node-exclusive} or primary interference constraint \cite{joo2009greedy}, the activation set $\mathcal{M}_{\text{primary}}$ consists of the set of all \emph{matchings} \cite{west2001introduction} in the graph $\mathcal{G}(V,E)$. Wired networks are a special case of the above model, where the activation set $\mathcal{M}_{\text{wired}}=2^{E}$.  In other words, in wired networks, packets can be transmitted over all links simultaneously. See Figure \ref{network} for an example of a wireless network with primary interference constraints.

\begin{figure}[h!]
\centering
\subfigure[a wireless network]{
	\begin{overpic}[width=0.15\textwidth]{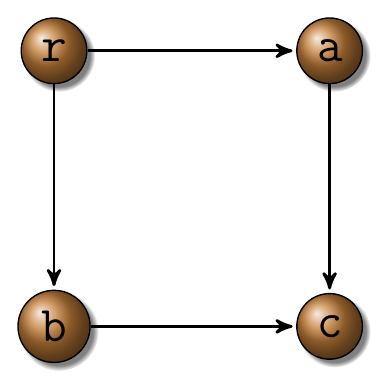}
	\end{overpic}
	\label{network-a}
}
\subfigure[activation vector $\bm{s}_1$]{
	\begin{overpic}[width=0.15\textwidth]{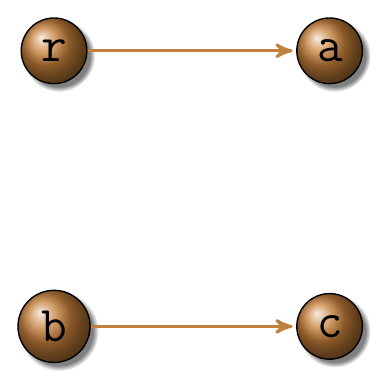}
	\end{overpic}
}
\subfigure[activation vector $\bm{s}_2$]{
  \begin{overpic}[width=0.15\textwidth]{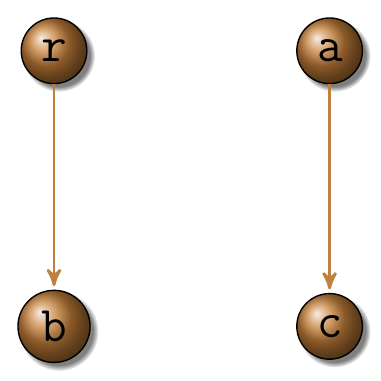}
  \end{overpic}
}
\caption{\small{A wireless network and its two maximal feasible link activations under the primary interference constraint.}}
\label{network}
\end{figure}

\subsection{Traffic Model}\label{traffic_model}
In this paper, we consider the \emph{Generalized Network Flow problem}, where incoming packets at a source node are to be distributed among an arbitrary set of destination nodes in a multi-hop fashion. Formally, the set of all distinct classes of incoming traffic is denoted by $\mathcal{C}$. A class $c$ traffic is identified by its source node $s^{(c)}\in V$ and the set of its required destination nodes $\mathcal{D}^{(c)} \subseteq V$. As explained below, by varying the structure of the destination set $\mathcal{D}^{(c)}$ of class $c$, this general framework yields the following four fundamental flow problems as special cases:
\begin{framed}
\begin{itemize}
	\item \textbf{\textsc{Unicast}}: All class $c$ packets, arriving at a source node $s^{(c)}$, are required to be delivered to a single destination node $\mathcal{D}^{(c)}=\{t^{(c)}\}$.
	\vspace{3pt}
	\item \textbf{\textsc{Broadcast}}: All class $c$ packets, arriving at a source node $s^{(c)}$, are required to be delivered to all nodes in the network, i.e., $\mathcal{D}^{(c)}=V$.
	\vspace{3pt}
	\item \textbf{\textsc{Multicast}}: All class $c$ packets, arriving at a source node $s^{(c)}$, are required to be delivered to a proper subset of nodes  $\mathcal{D}^{(c)}=\{t_1^{(c)}, t_2^{(c)}, \ldots, t_k^{(c)}\} \subsetneq V$. 
	\vspace{3pt}
	\item \textbf{\textsc{Anycast}}: A Packet of class $c$, arriving at a source node $s^{(c)}$, is required to be delivered to \emph{any one} of a given set of $k$ nodes $\mathcal{D}^{(c)}=t_1^{(c)} \oplus t_2^{(c)}\oplus \ldots \oplus t_k^{(c)}$. \\
	Thus the anycast problem is similar to the unicast problem, with all destinations forming a single \emph{super} destination node.\\
	\end{itemize} 
\end{framed}
Arrivals are i.i.d. at every slot, with $A^{(c)}(t)$ packets from class $c$ arriving at the source node $s^{(c)}$ at slot $t$. The mean rate of arrival for class $c$ is $\mathbb{E}A^{(c)}(t)=\lambda^{(c)}$. The arrival rate to the network is characterized by the vector $\bm{\lambda}=\{\lambda^{(c)}, c\in \mathcal{C}\}$. The total number of external packet arrivals to the entire network at any slot $t$ is assumed to be bounded by a finite number $A_{\max}$.

\subsection{Policy-Space}
 
An admissible policy $\pi$ for the generalized network flow problem executes the following two actions at every slot $t$: 
\begin{itemize}
\item \textsc{Link Activations}: Activating a subset of interference-free links $\bm{s}(t)$ from the activation set  $\mathcal{M}$.
\item \textsc{Packet Duplications and Forwarding}: Possibly duplicating \footnote{In order to transmit a packet over multiple downstream links (\emph{e.g.} in Broadcast or Multicast), the sender must duplicate the packet and send the copies to the respective downstream link buffers. } and forwarding packets over the activated links. Due to the link capacity constraint, at most one packet may be transmitted over an active link per slot.
\end{itemize}
The set of all admissible policies is denoted by $\Pi$. The set $\Pi$ is unconstrained otherwise and includes policies which may use all past and future packet arrival information.\\ 
A policy $\pi \in \Pi$ is said to \emph{support an arrival rate-vector $\bm{\lambda}$} if, under the action of the policy $\pi$, the destination nodes of any class $c$ receive distinct class $c$ packets at the rate $\lambda^{(c)}, c\in \mathcal{C}$. Formally, let $R^{(c)}(t)$ denote the number of distinct class-$c$ packets, received in common by all destination nodes $i \in \mathcal{D}^{(c)}$ \footnote{To be precise, the \emph{super}-destination node in case of Anycast.}, under the action of the policy $\pi$, up to time $t$.  
\begin{definition}{\emph{[Policy Supporting Rate-Vector $\bm{\lambda}$]}:} 
A policy $\pi \in \Pi$ is said to support an arrival rate vector $\bm{\lambda}$ if
\begin{eqnarray} \label{supporting_policy}
\liminf_{t \to \infty} \frac{R^{(c)}(t)}{t}= \lambda^{(c)}, \hspace{10pt} \forall c \in \mathcal{C}, \hspace{5pt}\mathrm{w.p. 1}
\end{eqnarray}
\end{definition}
The network-layer capacity region $\bm{\Lambda}(\mathcal{G}, \mathcal{C})$ \footnote{Note that, Network-layer capacity region is, in general (e.g. multicast), different from the Information-Theoretic capacity region \cite{ahlswede2000network}.} is defined to be the set of all supportable rates, i.e.,
\begin{eqnarray}\label{capacity-region}
\bm{\Lambda}(\mathcal{G}, \mathcal{C})\stackrel{\text{def}}{=} \{\bm{\lambda} \in \mathbb{R}^{|\mathcal{C}|}_+: \exists \pi \in \Pi \hspace{3pt}\textrm{supporting} \hspace{3pt}\bm{\lambda}\}
\end{eqnarray}
Clearly, the set $\bm{\Lambda}(\mathcal{G}, \mathcal{C})$ is \emph{convex} (using the usual \emph{time-sharing} argument). A policy $\pi^*\in \Pi$, which supports any arrival rate $\bm{\lambda}$ in the interior of the capacity region $\bm{\Lambda}(\mathcal{G}, \mathcal{C})$, is called a \emph{throughput-optimal} policy. 
\subsection{Admissible Routes of Packets}
 We will design a throughput-optimal policy, which delivers a packet $p$ to any node in the network \emph{at most} once.\footnote{This should be contrasted with the popular throughput-optimal unicast policy \emph{Back-Pressure} \cite{tassiulas}, which does not satisfy this constraint and may deliver the same packet to a node multiple times, thus potentially degrading its delay performance.} This immediately implies that the set of all admissible routes $\mathcal{T}^{(c)}$ for packets of any class $c$, in general, comprises of trees rooted at the corresponding source node $s^{(c)}$. In particular, depending on the type of class $c$ traffic, the topology of the admissible routes $\mathcal{T}^{(c)}$ takes the following special forms:
  \begin{framed}
\begin{itemize}
	\item \textbf{\textsc{Unicast Traffic}}: $\mathcal{T}^{(c)}=$ set of all $s^{(c)}-t^{(c)}$ paths in the graph $\mathcal{G}$. 
	\vspace{3pt}
\item \textbf{\textsc{Broadcast Traffic}}:  $\mathcal{T}^{(c)}=$ set of all spanning trees in the graph $\mathcal{G}$, rooted at  $s^{(c)}$. 
\vspace{3pt}
\item \textbf{\textsc{Multicast Traffic}}: $\mathcal{T}^{(c)}=$ set of all Steiner trees \cite{jain2003packing} in $\mathcal{G}$, rooted at $s^{(c)}$ and spanning the vertices $\mathcal{D}^{(c)}=\{t_1^{(c)}, t_2^{(c)}, \ldots, t_k^{(c)}\}$.
\vspace{3pt} 
\item \textbf{\textsc{Anycast Traffic}}: $\mathcal{T}^{(c)}=$ union of all $s^{(c)}-t_i^{(c)}$ paths in the graph $\mathcal{G}$, $i=1,2,\ldots, k$.  
\end{itemize}

\end{framed}

\subsection{Characterization of the Network-Layer Capacity Region}

Consider any arrival vector $\bm{\lambda}\in \bm{\Lambda}(\mathcal{G}, \mathcal{C})$. By definition, there exists an admissible policy $\pi \in \Pi$, which supports the arrival rate $\bm{\lambda}$ by means of storing, duplicating and forwarding packets efficiently. Taking time-averages over the actions of the policy $\pi$, it is clear that there exist a \emph{randomized} flow-decomposition and scheduling policy to route the packets such that none of the edges in the network is overloaded. Indeed, in the following theorem, we  show that for every $\bm{\lambda} \in \bm{\Lambda}(\mathcal{G}, \mathcal{C})$, there exist non-negative scalars $\{\lambda^{(c)}_i\}$, indexed by the admissible routes $T_i^{(c)}\in \mathcal{T}^{(c)}$ and a convex combination of the link activation vectors $\overline{\bm{\mu}} \in \text{conv}({\mathcal{M}})$ such that, 
\begin{eqnarray} 
\lambda^{(c)}= \sum_{T_i^{(c)} \in \mathcal{T}^{(c)}} \hspace{-5pt}\lambda^{(c)}_i, \hspace{10pt} \forall c \in \mathcal{C} \label{st1}\\
	\lambda_e \stackrel{(\text{def.})}{=} \hspace{-20pt}\sum_{(i,c): e \in T^{(c)}_i\hspace{-2pt}, T_i^{(c)} \hspace{-2pt}\in \mathcal{T}^{(c)}} \hspace{-25pt} \lambda^{(c)}_i \leq \overline{\mu}_e, \hspace{10pt} \forall e \in E.  \label{st2}
	\end{eqnarray}
 Eqn. \eqref{st1} denotes decomposition of the average incoming flows into different admissible routes and Eqn. \eqref{st2} denotes the fact that none of the edges in the network is overloaded, i.e. arrival rate of packets to any edge $e$ under the policy $\pi$ is \emph{at most} the rate allocated by the policy $\pi$ to the edge $e$ to serve packets.  \\
To state the result precisely, define the set $\overline{\bm{\Lambda}}$ to be the set of all arrival  vectors $\bm{\lambda} \in \mathbb{R}^{|\mathcal{C}|}_+$, for which there exists a randomized activation vector $\bm{\mu} \in \textrm{conv}(\mathcal{M})$ and a non-negative flow decomposition $\{\lambda_i^{(c)}\}$, such that Eqns. \eqref{st1} and \eqref{st2} are satisfied. We have the following theorem: 
\begin{framed}
\begin{theorem} \label{capacity-region-characterization}
The network-layer capacity region $\bm{\Lambda}(\mathcal{G}, \mathcal{C})$ is characterized by the set $\overline{\bm{\Lambda}}$, up to its boundary.  
\end{theorem}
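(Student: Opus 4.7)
The plan is to prove the two set inclusions separately, establishing the standard converse/achievability structure typical of such capacity-region characterizations.

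For the converse direction, $\bm{\Lambda}(\mathcal{G}, \mathcal{C}) \subseteq \overline{\bm{\Lambda}}$, I would fix any $\bm{\lambda} \in \bm{\Lambda}(\mathcal{G}, \mathcal{C})$ and take a supporting policy $\pi \in \Pi$. The central step is to associate, with each class-$c$ packet $p$ arriving under $\pi$, the subgraph of edges that $\pi$ actually uses to deliver a copy of $p$ to the destination set $\mathcal{D}^{(c)}$; by pruning superfluous transmissions I can take this subgraph to be a tree $T(p) \in \mathcal{T}^{(c)}$ rooted at $s^{(c)}$. For a time horizon $[0,T]$, let $N_i^{(c)}(T)$ count class-$c$ packets assigned to tree $T_i^{(c)}$, and let $\alpha_{\bm{s}}(T)$ be the fraction of slots in which activation vector $\bm{s}\in\mathcal{M}$ is used. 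Because $\mathcal{T}^{(c)}$ and $\mathcal{M}$ are finite (graph and activation set are finite), I can extract a single subsequence $T_k \to \infty$ along which all of $N_i^{(c)}(T_k)/T_k \to \lambda_i^{(c)}$ and $\alpha_{\bm{s}}(T_k) \to \alpha_{\bm{s}}$, with $\overline{\bm{\mu}} = \sum_{\bm{s}} \alpha_{\bm{s}}\bm{s} \in \operatorname{conv}(\mathcal{M})$. The supporting-rate hypothesis \eqref{supporting_policy} gives \eqref{st1}, and a link-by-link accounting (each activation of $e$ serves at most one packet per slot, and each packet in $T_i^{(c)}$ with $e \in T_i^{(c)}$ consumes one activation of $e$) yields \eqref{st2} almost surely.

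For the achievability direction, I would show that any $\bm{\lambda}$ in the interior of $\overline{\bm{\Lambda}}$ lies in $\bm{\Lambda}(\mathcal{G}, \mathcal{C})$ by exhibiting a simple \emph{stationary randomized} policy $\pi^{*}$. Given decomposition $\{\lambda_i^{(c)}\}$ and $\overline{\bm{\mu}} = \sum_{\bm s}\alpha_{\bm s}\bm{s}$ realizing \eqref{st1}–\eqref{st2} with strict slack, the policy routes each arriving class-$c$ packet independently along $T_i^{(c)}$ with probability $\lambda_i^{(c)}/\lambda^{(c)}$, and each slot independently activates $\bm{s} \in \mathcal{M}$ with probability $\alpha_{\bm{s}}$. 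The arrival rate to each edge $e$'s per-link FIFO queue is $\lambda_e$ and its service rate is $\overline{\mu}_e > \lambda_e$, so the Loynes–Strassen construction (or an elementary drift argument on the per-link queue) shows each such queue is stable. Stability of every edge queue together with the tree routing implies that every destination in $\mathcal{D}^{(c)}$ receives class-$c$ packets at the arrival rate $\lambda^{(c)}$, establishing \eqref{supporting_policy}.

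The hard part will be the converse. Two subtleties demand care. First, a policy in $\Pi$ is completely unrestricted: it may duplicate packets, deliver redundantly to the same node, keep unbounded state, or act non-causally. I must argue that any such behavior can be reduced, \emph{without decreasing the supported rate}, to one in which each accepted class-$c$ packet is delivered along a single tree in $\mathcal{T}^{(c)}$; otherwise the link-counting step breaks. Second, the almost-sure extraction of the flow rates $\lambda_i^{(c)}$ and the activation weights $\alpha_{\bm{s}}$ requires uniform integrability, which follows from the bound $A_{\max}$ on per-slot arrivals and from the fact that $\lambda^{(c)} = \liminf_t R^{(c)}(t)/t$ upper-bounds the long-run flow out of $s^{(c)}$. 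Handling these two points rigorously, and then combining the two inclusions to conclude that $\bm{\Lambda}(\mathcal{G},\mathcal{C})$ and $\overline{\bm{\Lambda}}$ coincide except possibly on their common boundary, completes the argument.
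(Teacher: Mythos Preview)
Your converse outline is sound and in fact more careful than the paper's own, which sidesteps your second subtlety by asserting ``without loss of generality'' that the supporting policy is stationary with an ergodic underlying chain and then invoking the ergodic theorem to obtain the limits $\lambda_i^{(c)}$ and $\overline{\bm\mu}$; your subsequence-extraction argument avoids that unjustified reduction.

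The achievability, however, has a real gap. You propose the stationary randomized policy with per-link FIFO and claim that Loynes or an ``elementary drift argument on the per-link queue'' stabilizes each edge because $\lambda_e<\overline{\mu}_e$. In the \emph{physical} multi-hop network the arrival process to an interior edge $e$ is the departure process of upstream edges and hence correlated with the common activation sequence; it is neither i.i.d.\ nor independent of the service at $e$, so neither Loynes nor a one-dimensional drift bound applies, and even the premise ``the arrival rate into $e$ equals $\lambda_e$'' is circular until upstream stability is already established. This is precisely where the paper does its work: its achievability \emph{is} the \textbf{UMW} construction of Sections~\ref{virtual_queues}--\ref{physical_queues}. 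The randomized policy you wrote down does appear there, but only as a comparison object in the drift bound for the \emph{precedence-relaxed virtual} queues $\tilde Q_e$ (Theorem~\ref{stability_theorem}), which genuinely are single-hop and to which a per-queue argument legitimately applies. Transferring that stability to the physical queues then requires the sample-path loading bound \eqref{bounded_arrival} together with the \textbf{ENTO} priority scheduler and a hop-by-hop induction (Theorem~\ref{physical_stability-theorem}). If you want a direct randomized-policy achievability you still need a multi-hop argument of comparable strength (e.g., induction on hop depth or a fluid-limit proof), not a per-edge Loynes step.
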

\end{framed}
Proof of Theorem \ref{capacity-region-characterization} consists of two parts: converse and achievability. Proof of the \emph{converse} is given in Appendix \ref{capacity-region-characterization-converse-proof}, where we show that all supportable arrival rates must belong to the set $\overline{\bm{\Lambda}}$. The main result of this paper, as developed in the subsequent sections, 
is the construction of an efficient admissible policy, called \textbf{U}niversal \textbf{M}ax-\textbf{W}eight (\textbf{UMW}), which \emph{achieves} any arrival rate in the interior of the set $\overline{\bm{\Lambda}} $.\\


\section{Overview of the \textbf{UMW} Policy} \label{overview}
In this section, we present a brief overview of our throughput-optimal \textbf{UMW} policy, designed and analyzed in the subsequent sections. Central to the \textbf{UMW} policy is a global state vector, called virtual queues  $\bm{\tilde{Q}}(t)$, used for packet routing and link activations. Each component of the virtual queues is updated at every slot according to a one-hop queueing (Lindley) recursion, corresponding to a \emph{relaxed} network, described in detail in section \ref{virtual_queues}. Unlike the well-known Back-Pressure algorithm for the unicast problem \cite{tassiulas}, in which packet routing decisions are made hop-by-hop using physical queue-lengths $\bm{Q}(t)$, the \textbf{UMW} policy prescribes an admissible route to each incoming packet immediately upon its arrival (dynamic source routing). This route selection decision is dynamically made by solving a suitable min-cost routing problem (e.g., shortest path, MST etc.) at the source with edge costs given by the current virtual-queue vector $\tilde{\bm{Q}}(t)$. Link activation decisions at each slot are made by a Max-Weight algorithm with link-weights set equal to $\tilde{\bm{Q}}(t)$. Having fixed the routing and activation policy as above, in section \ref{physical_queues} we design a packet scheduling algorithm for the physical network, which efficiently resolves contention among multiple packets that wait to cross the same (active) edge at the same slot.  We show that the overall policy is throughput-optimal. One significantly new feature of our algorithm is that it is entirely oblivious to the length of the physical queues of the network and utilizes the auxiliary virtual-queue state variables  for stabilizing the former.\\
Our proof of throughput-optimality of \textbf{UMW} leverages ideas from \emph{deterministic} adversarial queueing theory and combines it effectively with the \emph{stochastic} Lyapunov-drift based techniques and may be of independent theoretical interest.

\section{Global Virtual Queues: Structures, Algorithms, and Stability} \label{virtual_queues}
\begin{figure}	
	\begin{overpic}[width=0.5\textwidth]{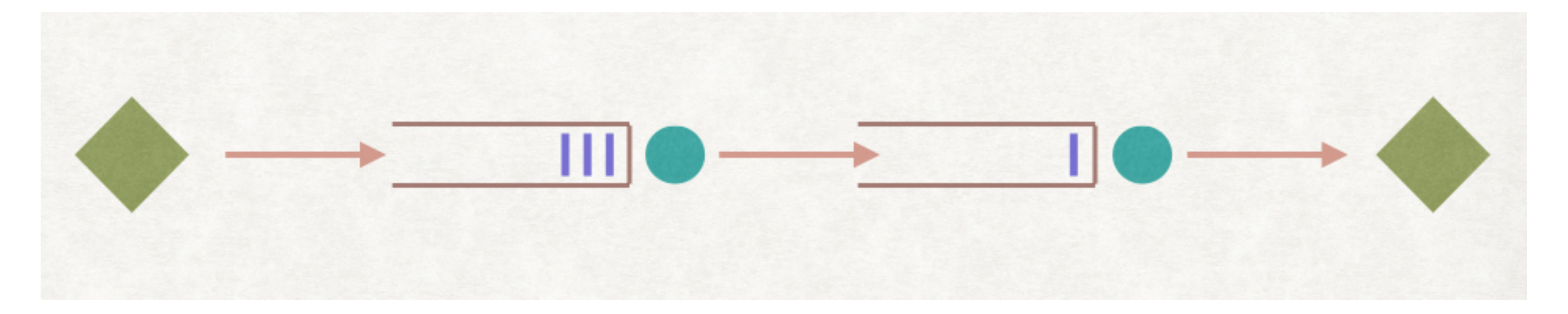}
		\put(12,12){\small{\textcolor{blue}{Arrival}}}
		\put(16,5){\small{$A(t)$}}
		\put(89,2.5){\small{sink}}
		\put(43,2){{\textcolor{blue}{departure}$|\textcolor{red}{Q_1(t)}$}}
		\put(24,14){$Q_1(t)$}
		\put(62,14){$Q_2(t)$}
	\end{overpic}
\caption{\small{A Multihop network with precedence constraints.}}
\label{tandem_queue}
\end{figure}
In this section, we introduce the notion of \emph{virtual queues} \footnote{Note that our notion of \emph{virtual-queues} is completely different from and unrelated to the notion of \emph{shadow-queues} proposed earlier in \cite{bui2009novel}, \cite{bui2008optimal} and \emph{virtual-queues} proposed in \cite{neely2006energy}. }, which is obtained by \emph{relaxing} the dynamics of the physical queues of the network  in the following intuitive fashion. 
\subsection{Precedence Constraints}
In a multi-hop network, if a packet $p$ is being routed along the path $T=l_1-l_2-\ldots-l_k$, where $l_i \in E$ is the $i$\textsuperscript{th} link on its path, then by the principle of causality, the packet $p$ cannot be physically transmitted over the $j$\textsuperscript{th} link $l_j$ if it has not already been transmitted by the first $j-1$ links $l_1,l_2, \ldots, l_{j-1}$. This constraint is known as the \emph{precedence constraint} in the network scheduling literature  \cite{lenstra1978complexity}. See Figure \ref{tandem_queue}. In the following, we make a radical departure by relaxing this constraint to obtain a simpler single-hop virtual system, which will play a key role in designing our policy and its optimality analysis. 
\subsection{The Virtual Queue Process $\{\tilde{\bm{Q}}(t)\}_{t \geq 1}$}
The \emph{Virtual queue process} $\tilde{\bm{Q}}(t)=\big(\tilde{Q}_e(t), e \in E\big)$  is an $|E|=m$ dimensional controlled stochastic process, imitating a fictitious queueing network \emph{without the precedence constraints}. In particular, when a packet $p$ of class $c$ arrives at the source node $s^{(c)}$, a dynamic policy $\pi$ prescribes a suitable route $T^{(c)}(t) \in \mathcal{T}^{(c)}$ to the packet. Denoting the set of all edges in the route $T^{(c)}(t)$ by  $\{l_1, l_2, \ldots, l_k\}$, this incoming packet induces a virtual arrival \emph{simultaneously} at each of the virtual queues $\big(\tilde{Q}_{l_i}\big), i=1,2,\ldots, k$, right upon its arrival to the source. Since the virtual network is assumed to be relaxed with no precedence constraints, any packet present in the virtual queue is eligible for service. See Figure \ref{virtual_queue} for an illustration. \\
\begin{figure}
\begin{overpic}[width=0.5\textwidth]{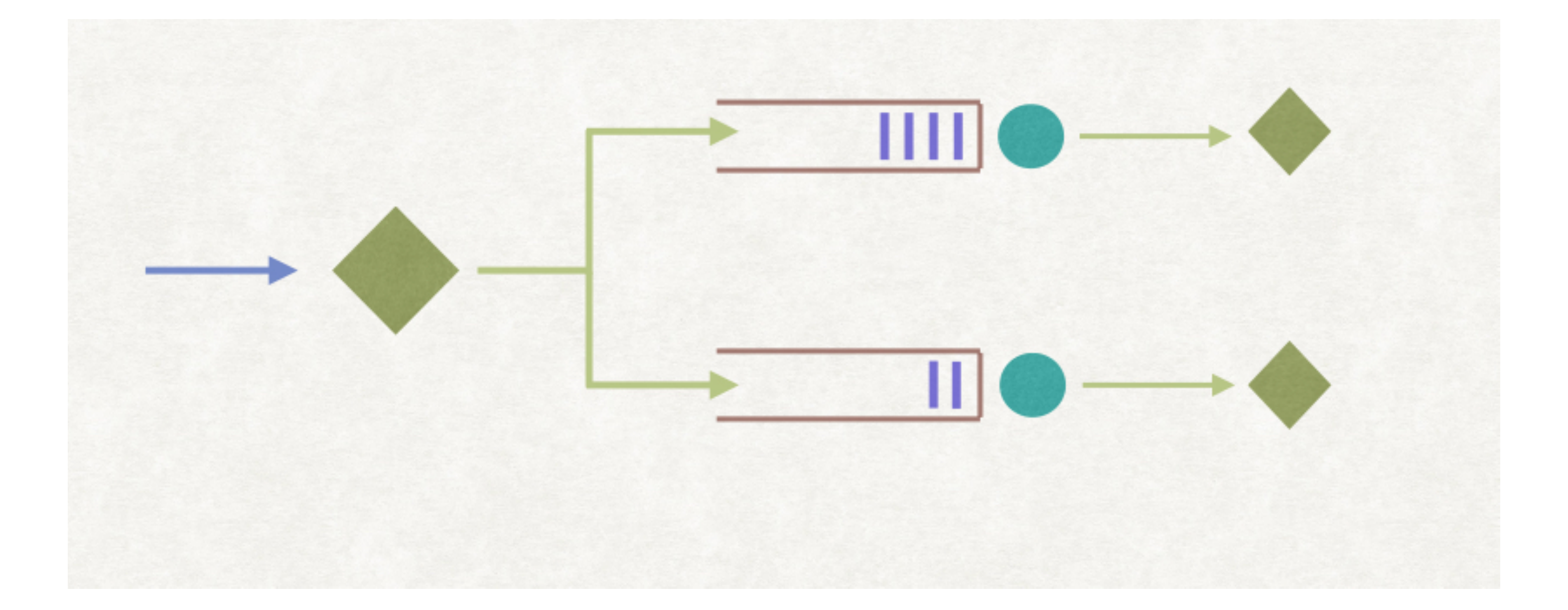}
\put(62,35){\textcolor{blue}{\small{$\mu_1(t)$}}}	
\put(62,19){\textcolor{blue}{\small{$\mu_2(t)$}}}
\put(7,24){\small{Arrival}}
\put(11,17){$\small{A(t)}$}
\put(37,34){\textcolor{blue}{$\small{A_1(t)}$}}
\put(37,9){\textcolor{blue}{$\small{A_2(t)}$}}
\put(50,23){$\tilde{Q}_1(t)$}
\put(50,7){$\tilde{Q}_2(t)$}
\put(21,13){\small{Source}}
\put(76,7){\small{virtual sinks }}
\end{overpic}
\caption{\small{Construction of the Virtual Queue process corresponding to the multi-hop network in Fig. \ref{tandem_queue}.}}
\label{virtual_queue}
\end{figure}
The (controlled) service process allocated to the virtual-queues is denoted by $\{\bm{\mu}^\pi(t)\}_{t \geq 1}$. We require the service process to satisfy the same activation constraints as in the original system, i.e., $\bm{\mu}^\pi(t) \in \mathcal{M}, \forall t \geq 1$. \\
Let $A^\pi_e(t)$ is the total number of virtual packet arrival (from all classes) to the queue $\tilde{Q}_e$ at time $t$ under the action of the policy $\pi$, i.e., 
 \begin{eqnarray} \label{routing_term}
A^\pi_e(t)= \sum_{c \in \mathcal{C}}A^{(c)}(t) \mathds{1}\big(e \in T^{(c)}(t)\big), \hspace{5pt} \forall e \in E. 
\end{eqnarray}

Hence, we have the following one-step evolution (Lindley recursion) of the virtual queue-process $\{\tilde{Q}_e(t)\}_{t \geq 1}$:

\begin{eqnarray} \label{queue-evolution}
\tilde{Q}_e(t+1)=\big(\tilde{Q}_e(t)+A^\pi_e(t)-\mu_e^\pi(t)\big)^+, \hspace{5pt} \forall e \in E, 
\end{eqnarray}

We emphasize that $A^\pi_e(t)$ is a function of the routing tree $T^{(c)}(t)$ that the policy chooses at time $t$, from the set of all admissible routes $\mathcal{T}^{(c)}$. This is discussed in the following. 

\subsection{Dynamic Control and Stability of the Virtual Queues} \label{dyn_contr}
Next we design a dynamic routing and link activation policy for the virtual network, which stabilizes the virtual queue process $\{\tilde{\bm{Q}}(t)\}_{t \geq 1}$, for all arrival rate-vectors  $\bm{\lambda}\in \mathrm{int}(\overline{\bm{\Lambda}})$. This policy is obtained by minimizing the one-step drift of a quadratic Lyapunov-function of the \emph{virtual queue-lengths} (as opposed to the real queue lengths used in the Back-Pressure policy \cite{tassiulas}). In the following section, we will show that when this dynamic policy is used in conjunction with a suitable packet scheduling policy in the physical network, the overall policy is throughput-optimal for the physical network.\\
To derive a stabilizing policy for the virtual network, consider a quadratic Lyapunov function $L(\tilde{\bm{Q}}(t))$ defined in terms of the virtual queue-lengths:
\begin{eqnarray*}
	L(\bm{\tilde{Q}}(t))= \sum_{e \in E} \tilde{Q}_e^2(t) 
\end{eqnarray*}
From the one-step dynamics of the virtual queues \eqref{queue-evolution}, we have:
\begin{eqnarray*}
\tilde{Q}_e(t+1)^2 &\leq& (\tilde{Q}_e(t)-\mu^\pi_e(t)+A^\pi_e(t))^2 \\
&=& \tilde{Q}_e^2(t) + (A_e^\pi(t))^2+(\mu_e^\pi(t))^2 +2\tilde{Q}_e(t)A^\pi_e(t)\\
&-& 2\tilde{Q}_e(t)\mu^\pi_e(t)-2\mu^\pi_e(t)A^\pi_e(t)
\end{eqnarray*}
Since $\mu^\pi_e(t) \geq 0$ and $A^\pi_e(t) \geq 0$, we have 
\begin{eqnarray*}
	\tilde{Q}_e^2(t+1) - \tilde{Q}_e^2(t) &\leq & (A_e^\pi(t))^2 + (\mu_e^\pi(t))^2 \\
	&+&2\tilde{Q}_e(t)A^\pi_e(t)-2\tilde{Q}_e(t)\mu^\pi_e(t)
\end{eqnarray*}
Hence, the one-step Lyapunov drift $\Delta^\pi(t)$, conditional on the current virtual queue-lengths $\bm{\tilde{Q}}(t)$, under the operation of any admissible Markovian policy $\pi \in \Pi$ is upper-bounded by 
\begin{eqnarray}\label{drift_expr}
	\Delta^\pi(t)&\stackrel{\mathrm{def}}{=}&\mathbb{E}\big(L(\bm{\tilde{Q}}(t+1))-L(\bm{\tilde{Q}}(t))|\bm{\tilde{Q}}(t)\big) \nonumber \\
	&\leq& B + 2 \sum_{e \in E}\tilde{Q}_e(t)\mathbb{E}\big(A_e^\pi(t)|\bm{\tilde{Q}}(t)\big)\nonumber \\
	&-& 2 \sum_{e \in E}\tilde{Q}_e(t)\mathbb{E}\big(\mu_e^\pi(t)|\bm{\tilde{Q}}(t)\big)
\end{eqnarray} 
where $B$ is a constant, bounded by $\sum_{e}(\mathbb{E}(A_e^\pi(t))^2+\mathbb{E}(\mu_e^\pi(t))^2)\leq A_{\max}^2+m$. \\
The upper-bound on the drift, given by \eqref{drift_expr}, holds good for any admissible policy in the virtual network. In particular, by minimizing the upper-bound point wise, and exploiting the separable nature of the objective, we derive the following decoupled dynamic routing and link activation policy for the virtual network:
\subsubsection*{Dynamic Routing Policy}
The optimal route for each class $c$,  over the set of all admissible routes, is selected by minimizing the following cost function, appearing in the middle of Eqn. \eqref{drift_expr}
\begin{eqnarray*}
	\mathsf{Routing Cost}^\pi\equiv \sum_{e\in E} \tilde{Q}_e(t) A_e^{\pi}(t),
\end{eqnarray*}
where we remind the reader that $A_e^\pi(t)$ are the routing policy dependent arrivals to the virtual-queue corresponding to the link $e$ at time $t$. \\
Using Eqn. \eqref{routing_term}, we may rewrite 
the objective-function as 
\begin{eqnarray}\label{w-r}
	\mathsf{Routing Cost}^\pi=\sum_{c \in \mathcal{C}} A^{(c)}(t) \bigg(\sum_{e \in E} \tilde{Q}_e(t) \mathds{1}\big(e \in T^{(c)}(t))\bigg)
\end{eqnarray}
Using the separability of the objective \eqref{w-r}, the above optimization problem decomposes into following min-cost route-selection problem $T_{\mathrm{opt}}^{(c)}(t)$ for each class $c$:
\begin{eqnarray}\label{opt_route}
	T_{\mathrm{opt}}^{(c)}(t) \in \argmin_{T^{(c)} \in \mathcal{T}^{(c)}} \bigg( \sum_{e \in E} \tilde{Q}_e(t) \mathds{1}\big(e \in T^{(c)}) \bigg)
\end{eqnarray}
Depending on the type of flow of class $c$, the optimal route-selection problem \eqref{opt_route} is equivalent to one of the following well-known combinatorial problems on the graph $\mathcal{G}$, with its edges weighted by the virtual queue-length vector $\bm{\tilde{Q}}$:
\begin{framed}
\begin{itemize}
	\item \textbf{\textsc{Unicast Traffic:}} $T_{\mathrm{opt}}^{(c)}(t)=$ The shortest $s^{(c)}-t^{(c)}$ path in the weighted-graph $\mathcal{G}$.
	\item \textbf{\textsc{Broadcast Traffic:}} $T_{\mathrm{opt}}^{(c)}(t)=$ The minimum weight spanning tree rooted at the source $s^{(c)}$, in the weighted-graph $\mathcal{G}$. \vspace{3pt}
	\item \textbf{\textsc{Multicast Traffic:}}
	$T_{\mathrm{opt}}^{(c)}(t)=$ The minimum weight Steiner tree rooted at the source $s^{(c)}$ and spanning the destinations $\mathcal{D}^{(c)}=\{t_1^{(c)}, t_2^{(c)}, \ldots, t_k^{(c)}\}$, in the weighted-graph $\mathcal{G}$.\vspace{3pt}
	\item \textbf{\textsc{Anycast Traffic:}} $T_{\mathrm{opt}}^{(c)}(t)=$ The shortest of the $k$ shortest $s^{(c)}-t_i^{(c)}$ paths, $i=1,2,\ldots, k$ in the weighted-graph $\mathcal{G}$.
\end{itemize}
\end{framed}
 Thus, the routes are selected according to a \emph{dynamic source routing} policy \cite{johnson1996dynamic}. Apart from the minimum weight Steiner tree problem for the multicast traffic (which is NP-hard with several known efficient approximation algorithms \cite{byrka2010improved}), all of the above routing problems on the \emph{weighted} virtual graph may be solved efficiently using standard algorithms \cite{cormen2009introduction}. \vspace{5pt}

\subsubsection*{Dynamic Link Activation Policy}
 A feasible link activation schedule $\bm{\mu}^*(t)\hspace{-1pt} \in \hspace{-1pt}\mathcal{M}$ is dynamically chosen at each slot by \emph{maximizing} the last term in the upper-bound of the drift-expression \eqref{drift_expr}, given as follows:
 \begin{eqnarray}\label{max-wt-sch}
 \bm{\mu}^*(t)  \in \argmax_{\bm{\mu} \in \mathcal{M}} \bigg(\sum_{e \in E} \tilde{Q}_e(t)\mu_e \bigg)
 \end{eqnarray}
 This is the well-known max-weight scheduling policy, which can be solved efficiently under various interference models (e.g., \emph{Primary} or node-exclusive model \cite{bui2009distributed}).\\
 In solving the above routing and scheduling problems, we tacitly made the assumption that the virtual queue vector $\bm{\tilde{Q}}(t)$ is \emph{globally known} at each slot. We will discuss practical distributed implementation of our algorithm in section \ref{implementation-section}.     \\
Next, we establish stability of the virtual queues under the above policy, which will be instrumental for proving throughput-optimality of the overall \textbf{UMW} policy:
 \begin{framed}
 \begin{theorem} \label{stability_theorem}
 	Under the above dynamic routing and link scheduling policy, the virtual-queue process $\{\bm{\tilde{Q}}(t)\}_{t\geq 0}$ is strongly stable for any arrival rate $\bm{\lambda} \in \mathrm{int}(\overline{{\bm{\Lambda}}})$, i.e., 
 	\begin{eqnarray*}
 		\limsup_{T\to \infty} \frac{1}{T}\sum_{t=0}^{T-1}\sum_{e \in E} \mathbb{E}(\tilde{Q}_e(t)) < \infty.
 	\end{eqnarray*}
 \end{theorem}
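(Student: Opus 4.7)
My plan is a textbook Lyapunov-drift stability argument built directly on top of the drift inequality \eqref{drift_expr} that has already been derived for every admissible Markovian policy. The two crucial observations are that the UMW policy has been defined precisely to (i) \emph{minimize} the routing-cost term $\sum_{e}\tilde Q_e(t)\mathbb{E}(A_e^\pi(t)\mid \tilde{\bm{Q}}(t))$ and (ii) \emph{maximize} the scheduling-reward term $\sum_{e}\tilde Q_e(t)\mathbb{E}(\mu_e^\pi(t)\mid \tilde{\bm{Q}}(t))$ pointwise over the current virtual queue state. Hence, for \emph{every} other admissible policy $\pi'$ one obtains the comparison bound
\begin{eqnarray*}
\Delta^{\textbf{UMW}}(t)&\leq& B\ +\ 2\sum_{e\in E}\tilde Q_e(t)\,\mathbb{E}\!\left(A_e^{\pi'}(t)\mid\tilde{\bm{Q}}(t)\right)\\
&-&2\sum_{e\in E}\tilde Q_e(t)\,\mathbb{E}\!\left(\mu_e^{\pi'}(t)\mid\tilde{\bm{Q}}(t)\right).
\end{eqnarray*}

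I would then invoke the slack provided by $\bm{\lambda}\in\mathrm{int}(\overline{\bm{\Lambda}})$ to exhibit a convenient stationary randomized ``witness'' policy $\pi^*$ to plug into the comparison bound. Interiority guarantees the existence of some $\epsilon>0$, a non-negative flow decomposition $\{\lambda_i^{(c)}\}$ with $\sum_i\lambda_i^{(c)}=\lambda^{(c)}$, and an averaged activation vector $\bar{\bm{\mu}}\in\mathrm{conv}(\mathcal{M})$ satisfying the \emph{strict} edge slack $\bar\mu_e\geq \lambda_e+\epsilon$ for every $e\in E$, where $\lambda_e$ is the total induced flow rate on edge $e$ as in \eqref{st2}. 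The policy $\pi^*$ acts in a state-blind, i.i.d.\ manner: in each slot it routes all class-$c$ arrivals along a common route $T^{(c)}(t)=T^{(c)}_i$ drawn with probability $\lambda^{(c)}_i/\lambda^{(c)}$, and, independently, it activates a link-schedule $\bm{s}(t)\in\mathcal{M}$ by a Carath\'eodory time-sharing over pure activations that realizes $\mathbb{E}\bm{s}(t)=\bar{\bm{\mu}}$. Under $\pi^*$ the conditional expectations appearing in the comparison bound simplify, independently of the state, to $\mathbb{E}(A_e^{\pi^*}(t)\mid\tilde{\bm{Q}}(t))=\lambda_e$ and $\mathbb{E}(\mu_e^{\pi^*}(t)\mid\tilde{\bm{Q}}(t))=\bar\mu_e$.

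Substituting $\pi^*$ into the comparison bound collapses its right-hand side to the negative-drift inequality
\begin{eqnarray*}
\Delta^{\textbf{UMW}}(t)\ \leq\ B\ -\ 2\epsilon\sum_{e\in E}\tilde Q_e(t).
\end{eqnarray*}
Taking total expectation, summing over $t=0,1,\ldots,T-1$, telescoping $\mathbb{E}L(\tilde{\bm{Q}}(T))-L(\tilde{\bm{Q}}(0))$, using non-negativity of $L$, and dividing by $2\epsilon T$ yields
\begin{eqnarray*}
\frac{1}{T}\sum_{t=0}^{T-1}\sum_{e\in E}\mathbb{E}\tilde Q_e(t)\ \leq\ \frac{B}{2\epsilon}\ +\ \frac{L(\tilde{\bm{Q}}(0))}{2\epsilon T}.
\end{eqnarray*}
Letting $T\to\infty$ gives the claimed strong-stability bound.

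The main technical points I expect to need careful handling are (a) the \emph{upgrade} of the non-strict edge inequality \eqref{st2} in the definition of $\overline{\bm{\Lambda}}$ to the \emph{strict} uniform slack $\bar\mu_e\geq\lambda_e+\epsilon$, which follows from $\bm{\lambda}$ lying in the interior of $\overline{\bm{\Lambda}}$ together with the convexity of that set, and (b) justifying that a time-shared activation realizing an arbitrary $\bar{\bm{\mu}}\in\mathrm{conv}(\mathcal{M})$ is admissible as a Markovian policy; both are standard but deserve explicit mention. Everything else is routine quadratic-Lyapunov manipulation.
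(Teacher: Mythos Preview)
Your proposal is correct and follows essentially the same route as the paper's own proof: compare the UMW drift upper bound \eqref{drift_expr} against a stationary randomized ``witness'' policy (the paper calls it \textbf{RAND}) built from the flow decomposition and a Carath\'eodory mixture of activations, use the interior slack $\bar\mu_e\ge\lambda_e+\epsilon$ to obtain $\Delta^{\textbf{UMW}}(t)\le B-2\epsilon\sum_e\tilde Q_e(t)$, then telescope and divide by $T$. The only cosmetic difference is that the paper assumes $L(\tilde{\bm{Q}}(0))=0$, whereas you carry the initial-condition term $L(\tilde{\bm{Q}}(0))/(2\epsilon T)$ explicitly; otherwise the arguments are identical.
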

 \end{framed}
 \begin{proof}
 The proof involves a Lyapunov drift-argument. See Appendix \ref{stability_proof} for details. 
 \end{proof}
 As a consequence of the strong stability of the virtual-queues $\{\tilde{Q}_e(t), e \in E\}$, we have the following sample-path result, which will be the key to our subsequent analysis:
 \begin{framed}
 \begin{lemma} \label{rate_stability}
 Under the action of the above policy, we have for any  $\bm{\lambda} \in \mathrm{int}(\bar{\bm{\Lambda}})$:
 	\begin{eqnarray*}
 		\lim_{t \to \infty} \frac{\tilde{Q}_e(t)}{t}=0, \hspace{5pt}\forall e \in E,\hspace{10pt} \mathrm{w.p.}\hspace{3pt} 1.
 	\end{eqnarray*}
 In other words, the virtual queues are rate-stable \cite{neely2010stochastic}.	
 \end{lemma}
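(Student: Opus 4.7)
The plan is to derive the almost-sure rate stability claim from the strong stability of Theorem \ref{stability_theorem} via a second-moment bound, Markov's inequality, the Borel-Cantelli lemma applied along a polynomial subsequence, and a final bounded-increments interpolation step.

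First, I revisit the drift inequality \eqref{drift_expr} that underlies the proof of Theorem \ref{stability_theorem}. Taking full expectations on both sides and telescoping the resulting one-step drift from $t=0$ to $T-1$, and then using non-negativity of the Lyapunov function $L$, yields
\begin{equation*}
\mathbb{E}\bigl[L(\tilde{\bm{Q}}(T))\bigr] \leq \mathbb{E}\bigl[L(\tilde{\bm{Q}}(0))\bigr] + BT,
\end{equation*}
which in turn gives the uniform second-moment bound $\mathbb{E}\bigl[\tilde{Q}_e(T)^2\bigr] \leq \mathbb{E}\bigl[L(\tilde{\bm{Q}}(T))\bigr] = O(T)$ for every edge $e \in E$. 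Note that the stronger negative-drift term $-2\epsilon\|\tilde{\bm{Q}}(t)\|_1$ is not needed here; it is already fully consumed in establishing strong stability.

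Next, applying Markov's inequality to the non-negative random variable $\tilde{Q}_e(T)^2$, for any fixed $\delta > 0$,
\begin{equation*}
\mathbb{P}\!\left(\tilde{Q}_e(T) > \delta T\right) \leq \frac{\mathbb{E}\bigl[\tilde{Q}_e(T)^2\bigr]}{\delta^{2} T^{2}} = O\!\left(\frac{1}{T}\right).
\end{equation*}
This tail bound is not summable in $T$, but it is summable along the subsequence $T_k = k^{2}$, since $\sum_k 1/k^{2} < \infty$. The Borel-Cantelli lemma therefore implies that $\tilde{Q}_e(k^{2})/k^{2} \to 0$ almost surely as $k \to \infty$. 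To extend this convergence to all integer times, I exploit the fact that the total number of exogenous packet arrivals per slot is bounded by $A_{\max}$, and hence $\tilde{Q}_e(t+1) \leq \tilde{Q}_e(t) + A_{\max}$ for every edge $e$ and every slot $t$. Consequently, for $t \in [k^{2}, (k+1)^{2}]$,
\begin{equation*}
\frac{\tilde{Q}_e(t)}{t} \leq \frac{\tilde{Q}_e(k^{2}) + (2k+1)A_{\max}}{k^{2}} \longrightarrow 0 \ \text{ a.s. as } k \to \infty,
\end{equation*}
and intersecting the resulting probability-one events over a countable sequence $\delta_n \downarrow 0$ yields $\tilde{Q}_e(t)/t \to 0$ almost surely for every $e \in E$, which is the claim.

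The principal technical obstacle is the first step: extracting the uniform $O(T)$ growth of $\mathbb{E}[L(\tilde{\bm{Q}}(T))]$ from the drift inequality, since Theorem \ref{stability_theorem} on its own only guarantees a time-averaged bound on $\mathbb{E}\|\tilde{\bm{Q}}(t)\|_1$. Once this moment bound is in hand, the subsequent Markov--Borel-Cantelli argument and the interpolation across $[k^{2}, (k+1)^{2}]$ are essentially mechanical. If one wished to avoid the subsequence construction entirely, one could instead apply Hajek's lemma to $\|\tilde{\bm{Q}}(t)\|_{2}$ (which has bounded increments and negative conditional drift $\leq -\epsilon/2$ when its value is large) to obtain exponential tails and $\sup_t \mathbb{E}\bigl[\tilde{Q}_e(t)^2\bigr] < \infty$, making the Markov bound directly summable in $T$; however the subsequence route above is more elementary and self-contained.
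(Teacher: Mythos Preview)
Your proof is correct and takes a genuinely different route from the paper's. The paper does not re-open the drift computation at all; instead it introduces an auxiliary queue process $\hat{Q}_e(t+1) = (\hat{Q}_e(t) - \mu_e(t))^+ + A_e(t)$ (the recursion format used in Neely's framework), proves by induction the two-sided sandwich $\tilde{Q}_e(t) \leq \hat{Q}_e(t) \leq \tilde{Q}_e(t) + A_{\max}$, transfers the strong-stability \emph{conclusion} of Theorem~\ref{stability_theorem} to $\hat{\bm Q}$, and then invokes Theorem~2.8(b) of \cite{neely2010stochastic} (strong stability plus bounded arrivals implies rate stability) as a black box before sandwiching back down to $\tilde{\bm Q}$. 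Your approach instead goes back inside the drift inequality to extract the $O(T)$ bound on $\mathbb{E}\big[L(\tilde{\bm Q}(T))\big]$ directly, and then runs a self-contained Markov/Borel--Cantelli/interpolation argument---effectively re-deriving the relevant content of Neely's theorem in situ. The paper's route is more modular (it consumes only the strong-stability conclusion, so it would apply unchanged if that conclusion came from some other source), whereas yours is more elementary, avoids both the auxiliary-queue comparison and the external reference, and along the way yields the useful intermediate fact $\mathbb{E}\big[\tilde{Q}_e(T)^2\big] = O(T)$.
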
 
 \end{framed}
 \begin{proof}
 	See Appendix \ref{rate_stability_proof}.
 \end{proof}
 The sample path result of Lemma \ref{rate_stability} may be interpreted as follows: For any given realization $\omega$ of the underlying sample space $\Omega$, define the function 
 \begin{eqnarray*}
 	F(\omega, t)= \max_{e \in E} \tilde{Q}_e(\omega, t). 
 \end{eqnarray*}
 Note that, for any $t \in \mathbb{Z}_+$, due to the boundedness of arrivals per slot, the function $F(\omega, t)$ is well-defined and finite.  
 In view of this, Lemma \eqref{rate_stability} states that under the action of the \textbf{UMW} policy, $F(\omega, t)=o(t)$ \emph{almost surely}. \footnote{$g(t)=o(t)$ if $\lim_{t\to \infty}\frac{g(t)}{t}=0$.} This result will be used in our sample pathwise  stability analysis of the physical queueing process $\{\bm{Q}(t)\}_{t\geq 0}$.

 \subsection{Consequence of the Stability of the Virtual Queues}
 It is apparent from the virtual-queue evolution equation \eqref{queue-evolution}, that the stability of the virtual queues under the \textbf{UMW} policy implies that the arrival rate at each virtual queue is \emph{at most} the service rate offered to it under the \textbf{UMW} routing and scheduling policy. In other words, \emph{effective load} of each edge $e$ in the virtual system is at most unity. This is a necessary condition for stability of the physical queues when the same routing and link activation policy is used for the multi-hop physical network. In the following, we make the notion of ``effective-load" mathematically precise. \paragraph*{Skorokhod Mapping}
 Iterating on the system equation \eqref{queue-evolution}, we obtain the following well-known discrete time Skorokhod-Map representation \cite{meyn2008control} of the virtual queue dynamics
 \begin{eqnarray} \label{skorokhod}
 	\tilde{Q}_e(t) = \bigg(\sup_{1\leq \tau \leq t} \big( A^\pi_e(t-\tau,t)-S^\pi_e(t-\tau, t)\big)\bigg)^+, 
 \end{eqnarray}
   where $A_e^\pi(t_1,t_2)\stackrel{\mathrm{def}}{=}\sum_{\tau=t_1}^{t_2-1} A^\pi_e(\tau)$, is the total number of arrivals to the virtual queue $\tilde{Q}_e$ in the time interval $[t_1,t_2)$ and $S_e^\pi(t_1,t_2)\stackrel{\mathrm{def}}{=}\sum_{\tau=t_1}^{t_2-1} \mu^\pi_e(\tau)$, is the total amount of service allocated to the virtual queue $\tilde{Q}_e$ in the interval $[t_1,t_2)$. For reference, we provide a proof of Eqn. \eqref{skorokhod} in Appendix \ref{skorokhod_proof}.  \\
   Combining Equation \eqref{skorokhod} with Lemma \ref{rate_stability}, we conclude that under the \textbf{UMW} policy, \emph{almost surely} for any sample path $\omega \in \Omega$, for each edge $e \in E$ and any $t_0< t$, we have
   \begin{eqnarray} \label{bounded_arrival}
   	A_e(\omega; t_0, t) \leq S_e(\omega; t_0,t) + F(\omega, t), 
   \end{eqnarray}
   where $F(\omega, t)=o(t)$. 
   \paragraph*{Implications for the Physical Network} Note that, every packet arrival to a virtual queue $\tilde{Q}_e$ at time $t$ corresponds to a packet in the physical network, that will eventually cross the edge $e$. Hence the loading condition \eqref{bounded_arrival} implies that under the \textbf{UMW} policy, the total number of packets injected during any time interval $(t_0,t]$, willing to cross the edge $e$, is less than the total amount of service allocated to the edge $e$ in that time interval up to an additive term of $o(t)$. Thus informally, the ``effective load" of any edge $e \in E$ is at most unity.  \\
  By utilizing the sample-path result in Eqn. \eqref{bounded_arrival}, in the following section we show that there exists a simple packet scheduling scheme for the physical network, which guarantees the stability of the physical queues, and consequently, throughput-optimality.

\section{Optimal Control of the Physical Network} \label{physical_queues}
With the help of the virtual queue structure as defined above, we next focus our attention on designing a throughput-optimal control policy for the multi-hop physical network. As discussed in Section \ref{system-model-section}, a control policy for the physical network consists of three components, namely (1) Routing, (2) Link activations and (3) Packet scheduling. In the proposed \textbf{UMW} policy, the (1) Routing and (2) Link activations for the physical network is done exactly in the same way as in the virtual network, based on the current values of the virtual queue state variables $\tilde{\bm{Q}}(t)$, described in Section \ref{dyn_contr}. There exist many possibilities for the third component, namely the packet scheduler, which efficiently resolves contention when multiple packets attempt to cross an active edge $e$ at the same time-slot $t$. 
Popular choices for the packet scheduler include FIFO, LIFO etc. 
In this paper, we focus on a particular scheduling policy which has its origin in the context of \emph{adversarial queueing theory} \cite{andrews2001universal}. In particular, we extend the \emph{Nearest To Origin} (NTO) policy to the generalized network flow setting, where a packet may be duplicated. This policy was proposed in \cite{Gamarnik} in the context of wired networks for the unicast problem. Our proposed scheduling policy is called Extended NTO (\textbf{ENTO}) and is defined as follows:

\begin{framed}
 \begin{definition}[Extended NTO]
  If multiple packets attempt to cross an active edge $e$ at the same time slot $t$, the Extended Nearest To Origin (\textbf{ENTO}) policy gives priority to the packet which has traversed the least number of hops along its path from its origin up to the edge $e$. 
 \end{definition}

\end{framed}

The Extended NTO policy may be easily implemented by maintaining a single priority queue \cite{cormen2009introduction} per edge. The initial priority of each incoming packet at the source is set to zero. Upon transmission by any edge, the priority of a transmitted packet is decreased by one. The transmitted packet is then copied into the next-hop priority queue(s) (if any) according to its assigned route. See Figure \ref{ENTO_fig} for an illustration. The pseudo code for the full \textbf{UMW} algorithm is provided in Algorithm \ref{UMW_algo}. 
 
 \begin{figure}
  \includegraphics[width=0.5\textwidth]{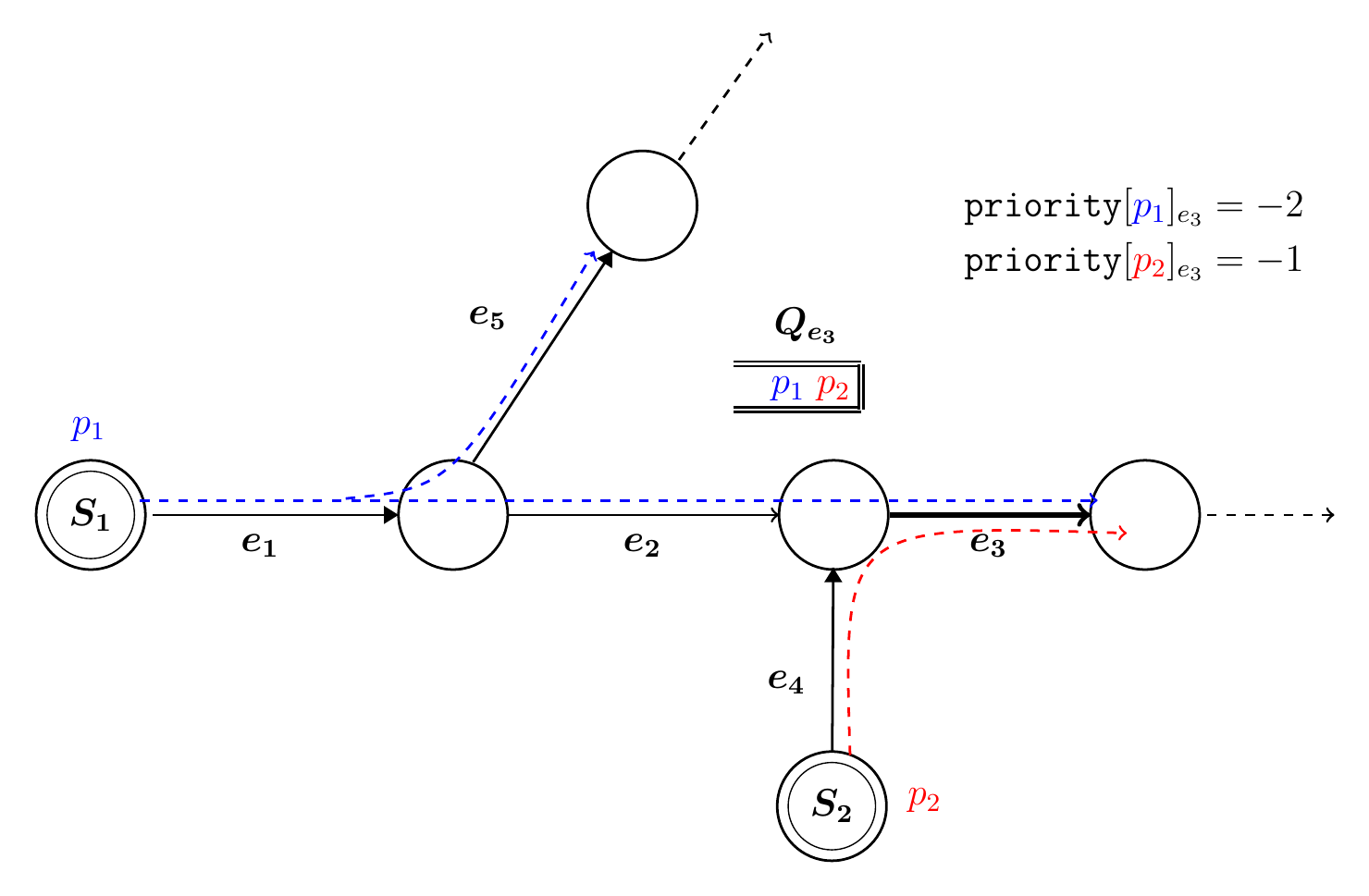}
  \caption{\small{A schematic diagram showing the scheduling policy ENTO in action. The packets $p_1$ and $p_2$ originate from the sources $S_1$ and $S_2$. Part of their assigned routes are shown in blue and red respectively. The packets contend for crossing the active edge $e_3$ at the same time slot. According to the ENTO policy, the packet $p_2$ has higher priority (having crossed a single edge $e_4$ from its source) than $p_1$ (having crossed two edges $e_1$ and $e_2$ from its source) for crossing the edge $e_3$. Note that, although a copy of $p_1$ might have already crossed the edge $e_5$, this edge does not fall in the path connecting the source $S_1$ to the edge $e_3$ and hence does not enter into priority calculations. }}
  \label{ENTO_fig}
 \end{figure}

 \begin{algorithm}  
\caption{Universal Max-Weight Algorithm (\textbf{UMW}) at slot $t$ for the Generalized Flow Problem in a Wireless Network}
\label{UMW_algo}
\begin{algorithmic}[1] 
 \REQUIRE Graph $\mathcal{G}(V,E)$, Virtual Queue-Lengths $\{\tilde{Q}_e(t), e \in E\}$ at the slot $t$.
 \STATE \textbf{[Edge-Weight Assignment]} Assign each edge of the graph $e \in E$ a weight $W_e(t)$ equal to $\tilde{Q}_e(t)$, i.e.
 \begin{eqnarray*}
 \bm{W}(t) \gets \bm{\tilde{Q}}(t)
 \end{eqnarray*}
 \STATE \textbf{[Route Assignment]} Compute a Minimum Weight Route $T^{(c)}(t) \in \mathcal{T}^{(c)}(t)$ for a class $c$ incoming packet in the weighted graph $\mathcal{G}(V,E)$, according to Eqn. \eqref{opt_route}.
 \STATE \textbf{[Link Activations]} Choose the activation $\bm{\mu}(t)$ from the set of all feasible activations $\mathcal{M}$, which maximizes the total activated link-weights, i.e.
 \begin{eqnarray*}
 \bm{\mu}(t) \gets \arg \max_{\bm{s} \in \mathcal{M}} \bm{s}\cdot \bm{W}(t)
 \end{eqnarray*}
\STATE \textbf{[Packet Forwarding]} Forward physical packets from the physical queues over the activated links according to the \textbf{ENTO} scheduling policy. 

\STATE \textbf{[Virtual Queue-Counter Update]}
Update the virtual queues assuming a precedence-relaxed system, \emph{i.e.},
\begin{eqnarray*}
	\tilde{Q}_e(t+1) \gets \bigg(\tilde{Q}_e(t)+A_e(t)-\mu_e(t)\bigg)^+, \hspace{5pt} \forall e \in E
\end{eqnarray*}
 \end{algorithmic}
 \end{algorithm} 
We next state the following theorem which proves stability of the physical queues under the \textbf{ENTO} policy:
\begin{framed}
 \begin{theorem} \label{physical_stability-theorem}
  Under the action of the \textbf{UMW} policy with \textbf{ENTO} packet scheduling, the physical queues are rate-stable \cite{neely2010stochastic} for any arrival vector $\bm{\lambda}\in \mathrm{int}(\overline{\bm{\Lambda}})$, i.e., 
  \begin{eqnarray*}
   \lim_{t \to \infty} \frac{\sum_{e \in E}Q_e(t)}{t} =0 , \hspace{10pt} \mathrm{w.p.} \hspace{1pt} 1
  \end{eqnarray*}
\end{theorem}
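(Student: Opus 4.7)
The plan is to establish rate-stability of the physical queues by combining Lemma \ref{rate_stability} with a sample-path induction on the hop-depth of packets, exploiting the priority structure of ENTO. Fix a sample path $\omega$ and write $F(t) := F(\omega, t)$, which is $o(t)$ almost surely by Lemma \ref{rate_stability}. The key input is the bound \eqref{bounded_arrival}: for every edge $e$ and interval $[t_0, t]$, the virtual arrivals $A_e(t_0,t)$ exceed the edge-$e$ activations $S_e(t_0,t)$ by at most $F(t)$. Since every physical packet routed over $e$ triggers exactly one virtual arrival at $\tilde{Q}_e$, this bounds the ``effective physical load'' at each edge.

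Partition the physical queue at $e$ by hop-count: let $Q_e^{(k)}(t)$ denote the number of packets at $e$ at time $t$ that have already traversed $k-1$ hops along their assigned route. Since admissible routes are acyclic trees, $k$ never exceeds $n$. Under ENTO, packets with fewer traversed hops strictly preempt those with more, so the aggregate sub-queue $\bar{Q}_e^{\leq k}(t) := \sum_{j \leq k} Q_e^{(j)}(t)$ is served at the full edge-$e$ rate $\mu_e(t)$ whenever it is nonempty. Hence $\bar{Q}_e^{\leq k}$ is a work-conserving queue to which the Skorokhod representation \eqref{skorokhod} applies.

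The core of the proof is an induction on $k$ establishing $\bar{Q}_e^{\leq k}(t) = o(t)$ almost surely for every edge $e$. For the base case $k=1$, every class-$1$ arrival at $e$ (a newly injected packet whose assigned route begins with $e$) also registers as a virtual arrival counted in $A_e(t_0, t)$; consequently $A_e^{(1)}(t_0, t) \leq A_e(t_0, t)$, and combining Skorokhod with \eqref{bounded_arrival} yields $Q_e^{(1)}(t) \leq F(t) = o(t)$. For the inductive step, each class-$(k+1)$ arrival at $e$ arises as a class-$k$ departure from some predecessor edge $e'$, and the total number of class-$k$ departures at $e'$ over $[t_0, t]$ is bounded by the corresponding class-$k$ arrivals plus $\bar{Q}_{e'}^{\leq k}(t_0)$. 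Iterating this relation down through layers $k, k-1, \ldots, 1$ reduces everything to class-$1$ arrivals (controlled by the virtual-queue bound) plus residual queue-length terms at predecessor edges, each of which is $o(t)$ by the inductive hypothesis. Plugged into the Skorokhod bound for $\bar{Q}_e^{\leq k+1}$, this yields a relation of the schematic form $\bar{Q}_e^{\leq k+1}(t) \leq F(t) + \sum_{e'} \bar{Q}_{e'}^{\leq k}(t_0^\ast)$ for some $t_0^\ast \leq t$, and the right-hand side is $o(t)$.

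The main obstacle is executing this unwinding carefully, because the Skorokhod ``last-empty-time'' at $e$ is in general different from those at its predecessors, so one must simultaneously track busy periods across the network and propagate the $o(t)$ control through a finite chain of at most $n$ layers. Once the induction closes, $Q_e(t) = \bar{Q}_e^{\leq n}(t) = o(t)$ for every $e$, and summing over the finite edge set $E$ gives $\lim_{t \to \infty} \sum_e Q_e(t)/t = 0$ almost surely, which is the claimed rate-stability.
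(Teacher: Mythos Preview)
Your proposal is correct and follows essentially the same strategy as the paper: a sample-path induction on hop depth, exploiting ENTO priority so that the cumulative ``first $k$ layers'' queue is work-conserving, and closing each step with the virtual-queue load bound \eqref{bounded_arrival}. Two points where the paper's execution differs and is somewhat cleaner are worth noting. First, instead of unwinding class-$(k{+}1)$ arrivals back through chains of predecessor edges, the paper uses the direct dichotomy: any packet that reaches $e$ during $[t_0,t]$ either arrived externally in that interval (hence is counted in $A_e(t_0,t)$) or was already somewhere in the network at time $t_0$ at a layer $j<k$; the latter group is bounded by the network-wide total $\sum_{j<k} R_j(t_0)$, which is exactly the inductive hypothesis (the paper inducts on the aggregate $R_k(t)=\sum_e Q_e^{(k)}(t)$ rather than on per-edge quantities). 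This sidesteps the predecessor-tracing entirely. Second, the ``main obstacle'' you flag---that the last-empty time $t_0^\ast$ may differ across edges and need not tend to infinity---is dispatched by a short preparatory lemma: if $F(t)=o(t)$ then $M(t):=\sup_{\tau\le t}F(\tau)$ is also $o(t)$ and non-decreasing, so one may replace each occurrence of $F(t_0^\ast)$ or $\bar{Q}_{e'}^{\le k}(t_0^\ast)$ by its running maximum evaluated at $t$ and conclude $o(t)$ uniformly over $t_0^\ast\le t$. You should make this monotonization step explicit, since the bare inductive hypothesis $\bar{Q}_{e'}^{\le k}(s)=o(s)$ does not by itself give $\bar{Q}_{e'}^{\le k}(t_0^\ast)=o(t)$.
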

\end{framed}
\begin{proof}
This theorem is proved by extending the argument of Gamarnik \cite{Gamarnik} and combining it with the sample path loading condition in Eqn. \eqref{bounded_arrival}. See Appendix \ref{physical_stability-proof} for the detailed argument.  
\end{proof}
\vspace{-10pt}
As a direct consequence of Theorem \ref{physical_stability-theorem}, we have the main result of this paper:
\begin{framed}
\begin{theorem} \label{UMW-optimality}
 The \textbf{UMW} policy is throughput-optimal.
\end{theorem}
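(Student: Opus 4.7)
The plan is to derive Theorem~\ref{UMW-optimality} as a direct corollary of Theorem~\ref{physical_stability-theorem} combined with Theorem~\ref{capacity-region-characterization}. By the definition of throughput-optimality given in Section~\ref{system-model-section}, it suffices to show that for every $\bm{\lambda}\in\mathrm{int}(\bm{\Lambda}(\mathcal{G},\mathcal{C}))$, the \textbf{UMW} policy supports $\bm{\lambda}$ in the sense of \eqref{supporting_policy}. Since Theorem~\ref{capacity-region-characterization} identifies the capacity region with $\overline{\bm{\Lambda}}$ up to its boundary, it is enough to establish the reception-rate condition for every $\bm{\lambda}\in\mathrm{int}(\overline{\bm{\Lambda}})$, which is precisely the regime in which Theorem~\ref{physical_stability-theorem} applies.

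Fix any $\bm{\lambda}\in\mathrm{int}(\overline{\bm{\Lambda}})$ and any class $c\in\mathcal{C}$. Let $P^{(c)}(t)=\sum_{\tau=0}^{t-1}A^{(c)}(\tau)$ denote the cumulative number of class-$c$ packets externally injected at the source $s^{(c)}$ by time $t$, and let $N^{(c)}(t)$ denote the number of distinct class-$c$ packets that have entered the network but have not yet been received in common by every destination in $\mathcal{D}^{(c)}$. Each such undelivered packet has, by construction of \textbf{UMW}, been assigned a fixed source-rooted route $T^{(c)}\in\mathcal{T}^{(c)}$, and ``undelivered'' means that at least one edge on this route has not yet transmitted the corresponding copy of the packet. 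Therefore, for every undelivered packet there is at least one physical queue holding at least one copy of it, and counting distinct packets against queue occupancy yields the crude but sufficient bound
\begin{equation*}
N^{(c)}(t)\ \le\ \sum_{e\in E} Q_e(t), \qquad \forall t\ge 0.
\end{equation*}
Since $R^{(c)}(t)=P^{(c)}(t)-N^{(c)}(t)$, we obtain the sandwich $P^{(c)}(t)-\sum_{e\in E}Q_e(t)\le R^{(c)}(t)\le P^{(c)}(t)$.

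To conclude, I divide by $t$ and pass to the limit. The strong law of large numbers applied to the i.i.d.\ arrival process yields $P^{(c)}(t)/t\to\lambda^{(c)}$ almost surely, while Theorem~\ref{physical_stability-theorem} gives $\sum_{e\in E}Q_e(t)/t\to 0$ almost surely under \textbf{UMW} with \textbf{ENTO}. Combining these in the sandwich above,
\begin{equation*}
\lim_{t\to\infty}\frac{R^{(c)}(t)}{t}\ =\ \lambda^{(c)}\quad\text{w.p.\ 1},
\end{equation*}
which verifies \eqref{supporting_policy} for every class $c$ and hence shows that \textbf{UMW} supports $\bm{\lambda}$. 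Since $\bm{\lambda}$ was an arbitrary point in the interior of $\overline{\bm{\Lambda}}=\bm{\Lambda}(\mathcal{G},\mathcal{C})$ (up to boundary), the throughput-optimality of \textbf{UMW} follows.

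The only substantive issue in this proof is the bookkeeping step $N^{(c)}(t)\le\sum_e Q_e(t)$, which hinges on the fact that the \textbf{UMW} policy commits to a single admissible tree per incoming packet and forwards distinct copies along that fixed tree; this rules out the kind of multi-instance or cyclic behavior that could otherwise decouple ``undelivered distinct packets'' from ``queue occupancy.'' Every other step is essentially a direct invocation of the preceding theorem or of the strong law, so no further technical obstacles are anticipated.
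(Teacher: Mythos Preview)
Your proof is correct and follows essentially the same approach as the paper: sandwich $R^{(c)}(t)$ between the cumulative arrivals and the cumulative arrivals minus $\sum_e Q_e(t)$, then invoke SLLN and Theorem~\ref{physical_stability-theorem}. The paper's version is slightly terser (it does not name $N^{(c)}(t)$ and does not explicitly invoke Theorem~\ref{capacity-region-characterization}), but the argument is identical.
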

\end{framed}
\begin{proof}
For any class $c \in \mathcal{C}$, the number of packets $R^{(c)}(t)$, received by all nodes $ i \in \mathcal{D}^{(c)}$ may be bounded as follows:
\begin{eqnarray} \label{bd_eqn1}
  A^{(c)}(0,t) - \sum_{e \in E} {Q}_e(t) \stackrel{(*)}{\leq} R^{(c)}(t) \leq A^{(c)}(0,t),
\end{eqnarray}
where the lower-bound $(*)$ follows from the simple observation that if a packet $p$ of class $c$ has not reached all destination nodes $\mathcal{D}^{(c)}$, then at least one copy of it must be present in some physical queue. \\ 
Dividing both sides of Eqn. \eqref{bd_eqn1} by $t$, taking limits and using SLLN and Theorem \ref{physical_stability-theorem}, we conclude that w.p. $1$
\begin{eqnarray*}
 \lim_{t \to \infty} \frac{R^{(c)}(t)}{t} = \lambda^{(c)} 
\end{eqnarray*}
Hence from the definition \eqref{supporting_policy}, we conclude that \textbf{UMW} is throughput-optimal.

\end{proof}

\section{Distributed Implementation} \label{implementation-section}
The \textbf{UMW} policy in its original form, as given in Algorithm \ref{UMW_algo}, is centralized in nature. This is because the sources need to know the topology of the network and the current value of the virtual queues $\tilde{\bm{Q}}(t)$ to solve the shortest route and the Max-Weight problems at  steps (2) and (3) of the algorithm. Although the topology of the network may be obtained efficiently by topology discovery algorithms \cite{chandra2002mesh}, keeping track of the virtual queue evolution (Eqn. \eqref{queue-evolution}) is subtler. Note that, in the special case where all packets arrive only at a single source node, no information exchange is necessary and the virtual queue updates (Step 5) may be implemented at the source locally. In the general case with multiple sources, it is necessary to periodically exchange packet arrival information among the sources to implement Step 5 exactly. To circumvent this issue, a heuristic version of \textbf{UMW} policy (referred to as \textbf{UMW} (heuristic) in the following) may be used in practice where physical queue-lengths $\bm{Q}(t)$ are used as a surrogate for the virtual queue-lengths $\tilde{\bm{Q}}(t)$ for weight and cost computations in Algorithm \ref{UMW_algo}. Routing based on physical queue-lengths still requires the exchange of queue-length-information. However, this can be implemented efficiently using the standard distributed Bellman-Ford algorithm. The simulation results in section \ref{phy_q_sim} show that the heuristic policy works well in practice and its delay performance is substantially better than the virtual queue based optimal \textbf{UMW} policy in wireless networks.

\section{Numerical Simulation} \label{simulation-section}

\subsection{Delay Improvement Compared to the Back Pressure Policy - the Unicast Setting}

To empirically demonstrate superior delay performance of the \textbf{UMW} policy over the Back-Pressure policy in the unicast setting, we simulate both  policies in the wired network shown in Figure \ref{network-comp}. All links have a unit capacity. We consider two concurrent unicast sessions with source-destination pairs given by $(s_1=1,t_1=8)$ and $(s_2=5,t_2=2)$ respectively. It is easy to see that $\textsf{Max-Flow}(s_1\to t_1)=2$ and $\textsf{Max-Flow}(s_2\to t_2)=1$ and there exist mutually disjoint paths to achieve the optimal rate-pair $(\lambda_1,\lambda_2)=(2,1)$.
Assuming Poisson arrivals at the sources $s_1$ and $s_2$ with intensities $\lambda_1=2\rho$ and $\lambda_2=\rho$, $0 \leq \rho \leq 1$, where $\rho$   denotes the ``load factor", Figure \ref{multi-com} shows a plot of total average queue-lengths as a function of the load factor $\rho$ under the operation of the \textbf{BP}, \textbf{UMW} (optimal) and \textbf{UMW} (heuristic) policy.  


\begin{figure}
	\center
		\begin{overpic}[scale=0.36]{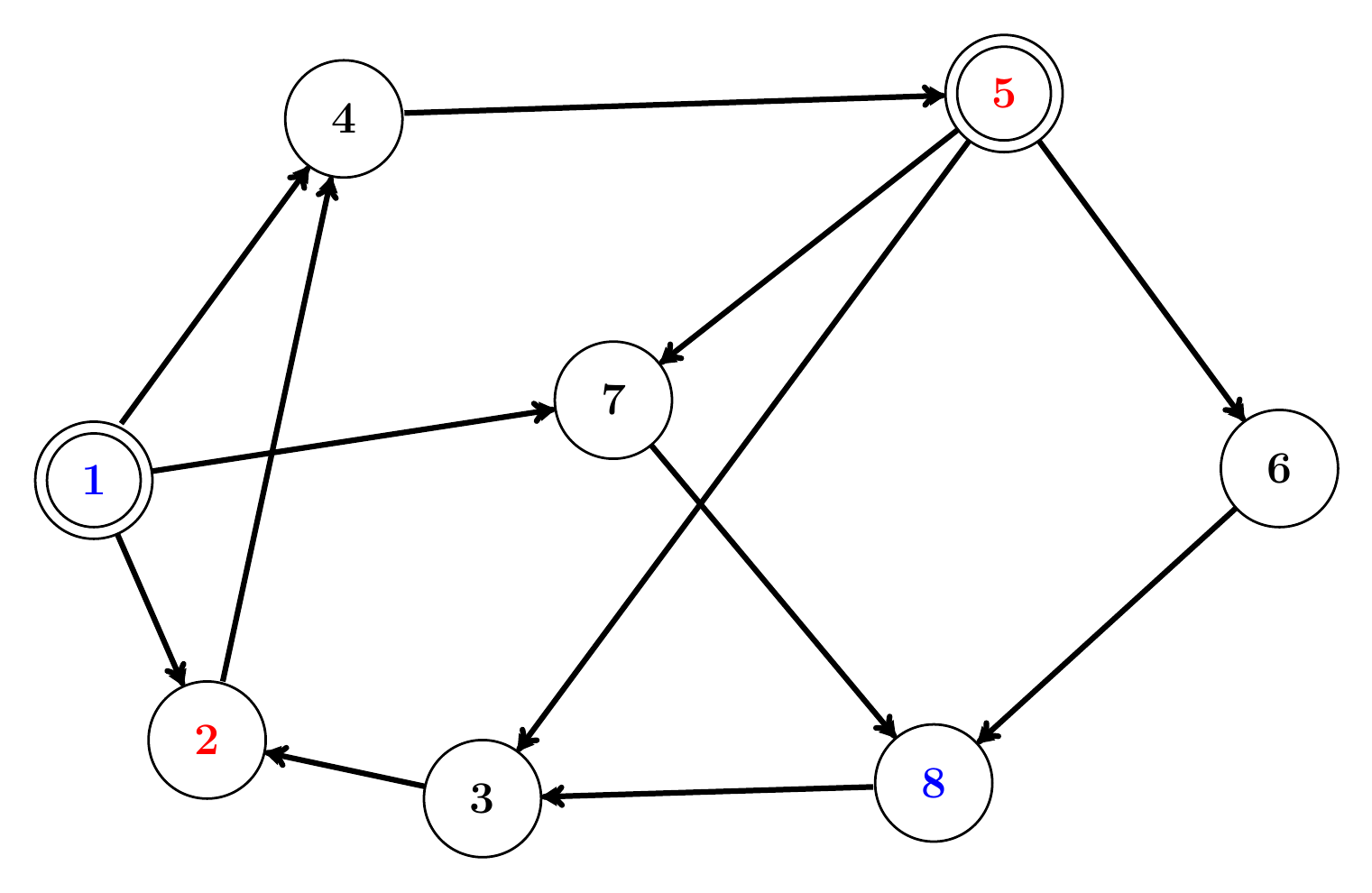}
		\put(-2,35){$\textcolor{blue}{s_1}$}
		\put(-6.5,29){$\textcolor{blue}{\Longrightarrow}$}
		\put(68,-1){$\textcolor{blue}{t_1}$}
		\put(75,65){$\textcolor{red}{s_2}$}
		\put(78,57){$\textcolor{red}{\Longleftarrow}$}
		\put(-13,29){$\textcolor{blue}{\lambda_1}$}
		\put(88,57){$\textcolor{red}{\lambda_2}$}
		\put(15,02){$\textcolor{red}{t_2}$}
		\put(71.5,1.5){$\textcolor{blue}{\searrow}$}
		\put(7,04){$\textcolor{red}{\swarrow}$}
		\end{overpic}
	\caption{\small{The wired network topology used for unicast simulation }}
	\label{network-comp}
\end{figure}

\begin{figure}
	\center
	\begin{overpic}[scale=0.24]{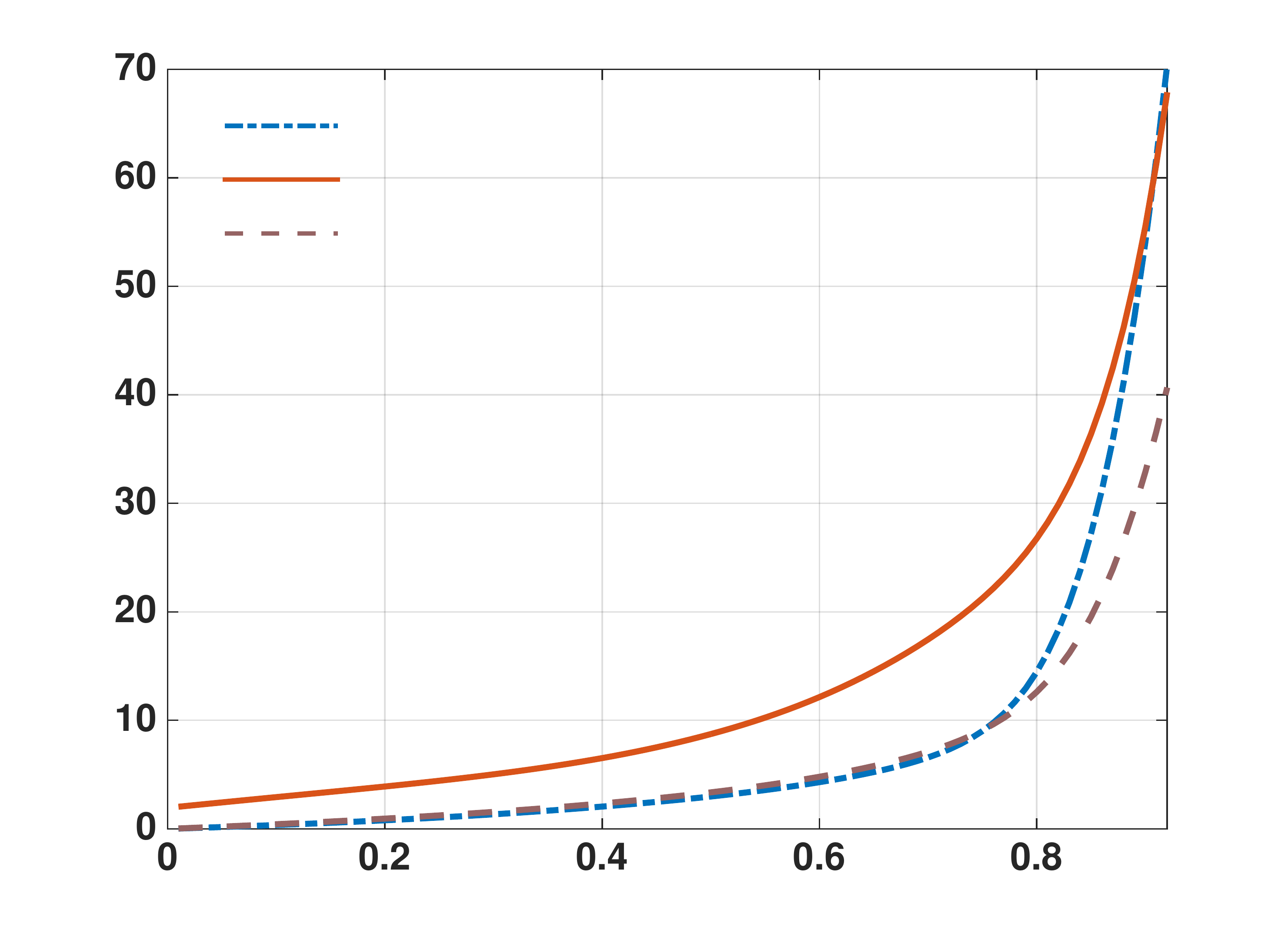}
	\put(52,1){$\rho$}
	\put(29,57){\footnotesize{\textbf{\textcolor{black}{BP}}}}
	\put(29,62){\footnotesize{\textbf{\textcolor{black}{UMW }}}(optimal)}
	\put(29,52){\footnotesize{\textbf{\textcolor{black}{UMW }}}(heuristic)}
	\put(2,20){\rotatebox{90}{\footnotesize{Avg. Queue Lengths}}}
	\end{overpic}
	\caption{\small{Comparing the delay performances of the \textbf{BP} and \textbf{UMW} (optimal and heuristic) policies in the unicast setting of Fig. \ref{network-comp}.}}
	\label{multi-com}
\end{figure}

From the plot, we conclude that both the optimal and heuristic \textbf{UMW} policies outperforms the \textbf{BP} policy in terms of average queue-lengths, and hence (by Little's Law), end-to-end delay, especially in low-to-moderate load regime. The primary reason being, the \textbf{BP} policy, in principle, explores all possible paths to route packets to their destinations. In the low-load regime, the packets may also cycle in the network indefinitely, which increases their latency. The \textbf{UMW} policy, on the other hand, transmits all packets  along ``optimal" acyclic routes. This results in substantial reduction in end-to-end delay.

 
 \subsection{Using the Heuristic \textbf{UMW} policy for Improved Latency in the Wireless Networks - the Broadcast Setting}\label{phy_q_sim}
 Next, we empirically demonstrate that the heuristic \textbf{UMW} policy that uses physical queue-lengths $\bm{Q}(t)$ (instead of virtual queues $\bm{\tilde{Q}}(t)$ as in the optimal \textbf{UMW} policy) not only achieves the full broadcast capacity but yields better delay performance in this particular wireless network. As discussed earlier, the heuristic policy is practically easier to implement in a distributed fashion.
 We simulate a $3\times 3$ wireless grid network shown in Figure \ref{grid_topo}, with \emph{primary} interference constraints \cite{joo2009greedy}.
  The broadcast capacity of the network is known to be $\lambda^*=\frac{2}{5}$ \cite{Sinha:2016:TMB:2942358.2942390}. The ENTO policy is used for packet scheduling. The average queue-length is plotted in Figure \ref{delay_fig_UMW_BP1} as a function of the packet arrival rate $\lambda$ under the operation of the (a) \textbf{UMW} (optimal) and (b) \textbf{UMW} (heuristic) policies. 
   The plot  shows that the heuristic policy results in much smaller queue-lengths than the optimal policy. The reason being that physical queues capture the network congestion ``more accurately" for proper link activations. 
 
%

%
 
%
%
 \begin{figure}[h]
\centering
	\begin{minipage}{0.45\textwidth}
	\centering
		\begin{overpic}[scale=0.54]{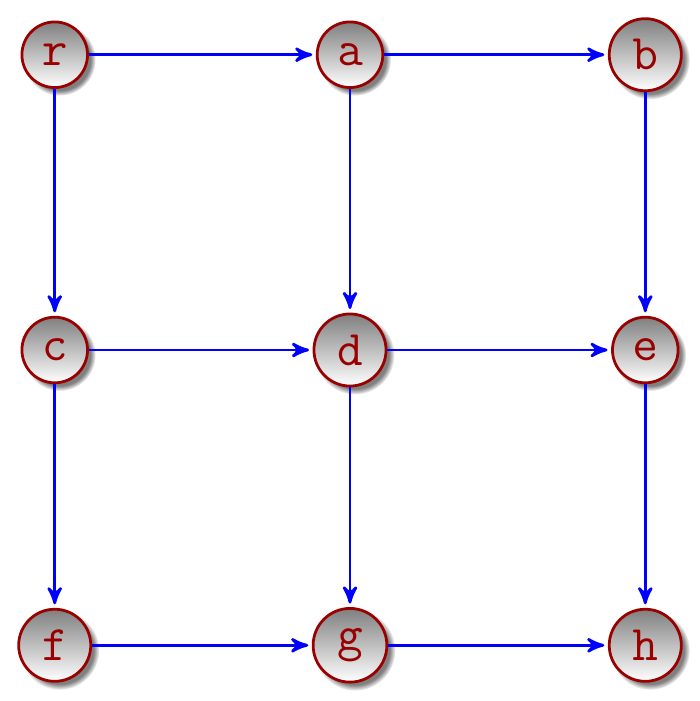}
		\put(-18,90){\textcolor{blue}{$\lambda$}}
		\put(-13,103){\small{Source}}
		\put(-12,90){$\textcolor{blue}{\Longrightarrow}$}
			\end{overpic}
			\caption{\small{The wireless topology used for broadcast simulation}}
			\label{grid_topo}
	\end{minipage}
\begin{minipage}{0.45\textwidth}
    \centering 
	\begin{overpic}[scale=0.23]{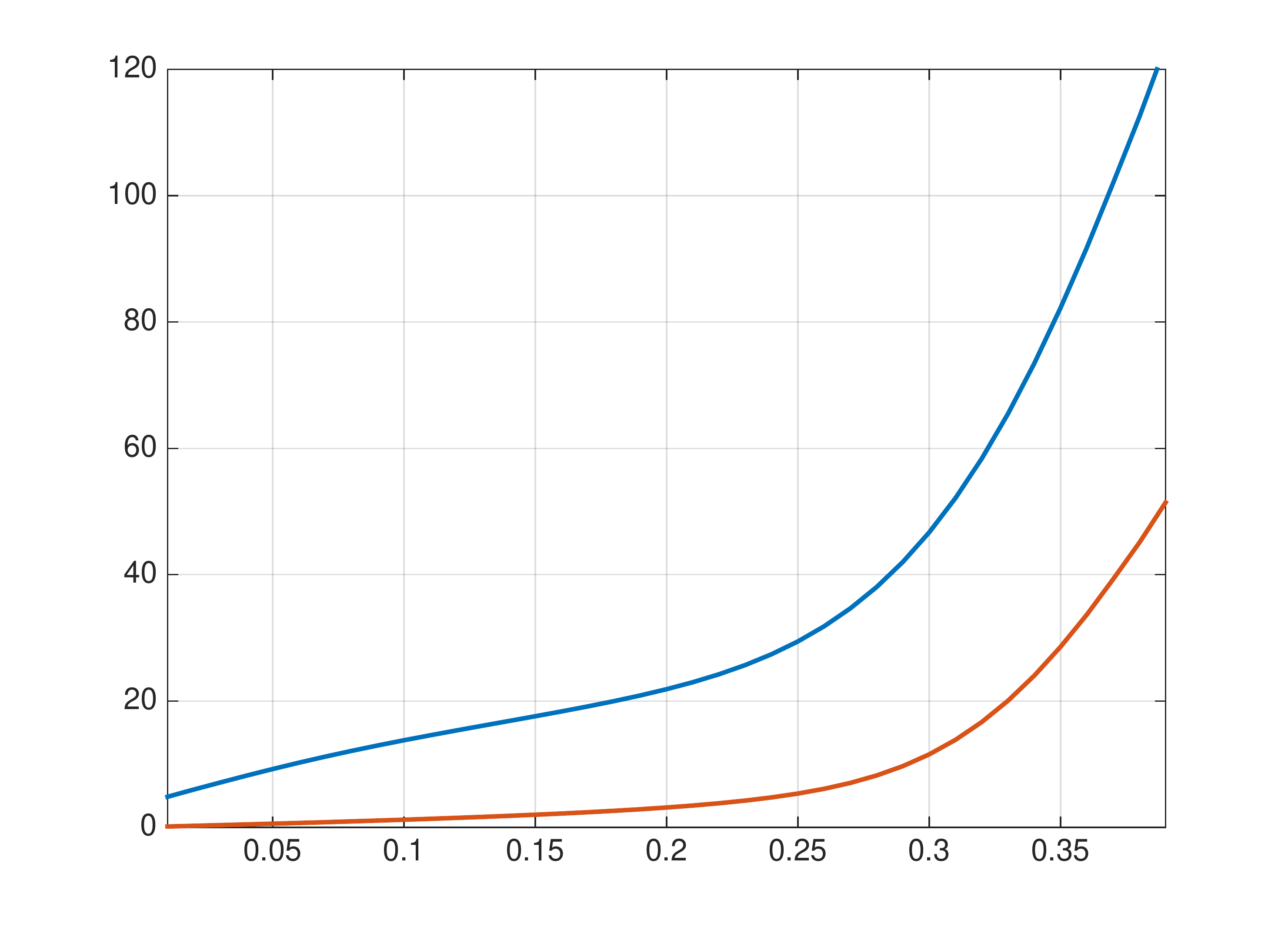}
		\put(40,-1){Arrival rate ($\lambda$)}
		\put(70,55.4){\footnotesize{\textbf{UMW} }}
		\put(62,55.4){\footnotesize{$(a)$}}
		\put(68,51){\footnotesize{(optimal)}}
		\put(71,25){\footnotesize{\textbf{UMW}}}
		\put(63,25){\footnotesize{(b)}}
		\put(69,20.5){\footnotesize{(heuristic)}}
		\put(1,20){\rotatebox{90}{\small{Avg. Queue lengths}}}
	\end{overpic}
	\caption{\small{Comparing Avg. Queue lengths as a function of arrival rate for the optimal (in blue) and the heuristic (in red) \textbf{UMW} Policy for the grid network in Figure \ref{grid_topo} in the \textbf{broadcast} setting.}}

	\label{grid_plot}
	\label{delay_fig_UMW_BP1}
	\end{minipage}
	\end{figure}


\section{Conclusion} \label{conclusion-section}
In this paper, we have proposed a new, efficient and throughput-optimal policy, named \textbf{U}niversal \textbf{M}ax-\textbf{W}eight (\textbf{UMW}), for the Generalized Network Flow problem. The \textbf{UMW} policy can simultaneously handle mix of Unicast, Broadcast, Multicast and Anycast traffic in arbitrary  networks and is empirically shown to have superior performance compared to the existing policies. The next step would be to investigate whether the \textbf{UMW} policy still retains its optimality when implemented with physical queue-lengths, instead of the virtual queue-lengths. An affirmative answer to this question would imply a more efficient implementation of the policy.

\bibliographystyle{IEEEtran}
\bibliography{MIT_broadcast_bibliography}

\clearpage
\section{Appendix}

\subsection{Proof of Converse of Theorem \ref{capacity-region-characterization}} \label{capacity-region-characterization-converse-proof}
\begin{proof}
Consider any admissible arrival rate vector $\bm{\lambda} \in \bm{\Lambda}(\mathcal{G}, \mathcal{C})$. By definition, there exists an admissible policy $\pi\in \Pi$ which supports the arrival vector $\bm{\lambda}$ in the sense of Eqn.  \eqref{supporting_policy}. Without any loss of generality, we may assume the policy $\pi$ to be stationary and the associated DTMC to be ergodic. Let $A^{(c)}_i(t)$ denote the total number of packets from class $c$ that have finished their routing along the route $T_i^{(c)} \in \mathcal{T}^{(c)}$ up to time $t$. Note that, each packet is routed along one admissible route only. Hence, if the total number of arrival to the source $s^{(c)}$ of class $c$ up to time $t$ is denoted by the random variable $A^{(c)}(t)$, we have 
\begin{eqnarray} \label{counting_pkts}
A^{(c)}(t) \stackrel{(a)}{\geq} \sum_{T^{(c)}_i \in \mathcal{T}^{(c)}}A^{(c)}_i(t) \stackrel{(b)}{=} R^{(c)}(t).
\end{eqnarray}
In the above, the inequality (a) follows from the observation that any packet $p$ which has finished its routing along some route $T_i^{(c)} \in \mathcal{T}^{(c)}$ by the time $t$, must have arrived at the source by the time $t$.  The equality (b) follows from the observation that any packet $p$ which has finished its routing by time $t$ along some route $T^{(c)}_i \in \mathcal{T}^{(c)}$, has reached all of the destination nodes $\mathcal{D}^{(c)}$  of class $c$ by time $t$ and vice versa. \\
Dividing both sides of equation \eqref{counting_pkts} by $t$ and taking limit as $t \to \infty$, we have $w.p. 1$
\begin{eqnarray*}
\lambda^{(c)}\stackrel{(d)}{=}\lim_{t \to \infty} \frac{A^{(c)}(t)}{t}  
&\geq &\liminf_{t \to \infty} \frac{1}{t} \sum_{T^{(c)}_i \in \mathcal{T}^{(c)}}A^{(c)}_i(t) \\
&=&  \liminf_{t \to \infty}\frac{R^{(c)}(t)}{t}\\
&\stackrel{(f)}{=}& \lambda^{(c)}, 
\end{eqnarray*} 
where equality (d) follows from the SLLN, 
and equality  (f) follows from the Definition \eqref{supporting_policy}. \\
From the above inequalities, we conclude that w.p. $1$
\begin{eqnarray}
 \lim_{t \to \infty} \frac{1}{t} \sum_{T^{(c)}_i \in \mathcal{T}^{(c)}}A^{(c)}_i(t)= \lambda^{(c)}, \hspace{10pt} \forall c \in \mathcal{C}
\end{eqnarray}
Now we use the fact that the policy $\pi$ is stationary and the associated DTMC is \emph{ergodic}. Thus the time-average limits exist and they are constant \emph{a.s.}. For all $T_i^{(c)} \in \mathcal{T}^{c}$ and $c \in \mathcal{C}$, define 
\begin{eqnarray}
 \lambda_i^{(c)} \stackrel{\mathrm{def}}{=} \lim_{t \to \infty} \frac{1}{t}A_i^{(c)}(t)
\end{eqnarray}
Hence, from the above, we get
\begin{eqnarray} \label{constr_eq1}
 \lambda^{(c)} = \sum_{T_i^{(c)} \in \mathcal{T}^{(c)}} \lambda_i^{(c)}
\end{eqnarray}
Now consider any edge $e \in E$ in the graph $\mathcal{G}$. Since the variable $A_i^{(c)}(t) $ denotes the total number of packets from class $c$, that have \emph{completely} traversed along the tree $T^{(c)}_i$, the following inequality holds good for any time $t$ 
\begin{eqnarray}
 \sum_{(i,c): e \in T_i^{(c)}, T_i^{(c)} \in \mathcal{T}^{(c)}} \hspace{-20pt}A_i^{(c)}(t) \leq \sum_{\tau =1}^{t} \mu_e(\tau),
\end{eqnarray}
where the left-hand side denotes a lower-bound on the number of packets that have crossed the edge $e$ and the right hand side denotes the amount of service that have been provided to edge $e$ up to time $t$ by the policy $\pi$.\\
Dividing both sides by $t$ and taking limits of both side, and noting that the limit on the left-hand side exists w.p. $1$, we have 
\begin{eqnarray} \label{constr_eq2}
 \sum_{(i,c): e \in T_i^{(c)}, T_i^{(c)} \in \mathcal{T}^{c}} \hspace{-20pt} \lambda_i^{(c)} \leq \overline{\mu}_e,
\end{eqnarray}
where $\overline{\bm{\mu}}= \lim_{t \to \infty} \frac{1}{t} \sum_{\tau=1}^{t} \bm{\mu}(\tau)$. Since $\bm{\mu}(\tau) \in \mathcal{M}, \forall \tau $ and the set $\text{conv}(\mathcal{M})$ is closed, we conclude that $\overline{\bm{\mu}} \in \text{conv}(\mathcal{M})$. Eqns. \eqref{constr_eq1} and \eqref{constr_eq2} concludes the proof of the theorem.


\end{proof}

\subsection{Proof of Theorem \ref{stability_theorem}} \label{stability_proof}
\begin{proof}
Consider an arrival rate vector $\bm{\lambda} \in \mathrm{int}(\overline{\bm{\Lambda})}$. Thus, from Eqns. \eqref{st1} and \eqref{st2}, it follows that there exists a scalar $\epsilon >0$ and a vector $\bm{\mu} \in \mathrm{conv}(\mathcal{M})$, such that we can decompose the total arrival for each class $c \in \mathcal{C}$ into a finite number of routes, such that 
\begin{eqnarray} \label{diff}
\lambda_e \stackrel{(\text{def.})}{=} \hspace{-20pt}\sum_{(i,c): e \in T^{(c)}_i\hspace{-2pt}, T_i^{(c)} \hspace{-2pt}\in \mathcal{T}^{(c)}} \hspace{-20pt} \lambda^{(c)}_i \leq \mu_e - \epsilon, \hspace{10pt} \forall e \in E
\end{eqnarray}
By Caratheodory's theorem \cite{matouvsek2002lectures}, we can write 
\begin{eqnarray}
\bm{\mu}= \sum_{i=1}^{m+1} p_i \bm{s}_i, 
\end{eqnarray}
for some activation vectors $\bm{s}_i \in \mathcal{M}, \forall i$ and some probability distribution $\bm{p}$. \\
Now consider the following auxiliary stationary randomized routing and link activation policy \textbf{RAND} $\in \Pi$ for the virtual queue system $\{\tilde{\bm{Q}}(t)\}$, which will be useful in our proof. The randomized policy $\textbf{RAND}$ randomly selects the activation vector $\bm{s}_j$ with probability $p_j, j=1,2,\ldots, m+1$ and routes the incoming packet of class $c$ along the route $T^{(c)}_i \in \mathcal{T}^{(c)}$, with probability $\frac{\lambda_i^{(c)}}{\lambda^{(c)}}, \hspace{1pt}\forall i, c $. Hence the total expected arrival rate to the virtual queue $\tilde{Q}_e$ at time slot $t$, due to the action of the stationary randomized policy \textbf{RAND}  is given by 
\begin{eqnarray} \label{tot_arr}
\mathbb{E}A_e^{\mathbf{RAND}}(t) =\lambda_e= \hspace{-20pt}\sum_{(i,c): e \in T^{(c)}_i\hspace{-2pt}, T_i^{(c)} \hspace{-2pt}\in \mathcal{T}^{(c)}} \hspace{-20pt} \lambda^{(c)}_i , \hspace{10pt} \forall e \in E 
\end{eqnarray}
and the expected total service rate to the virtual server for the queue $\tilde{Q}_e$ is given by 
\begin{eqnarray} \label{tot_ser}
\mathbb{E}\mu_e^{\mathbf{RAND}}(t)=\sum_{i=1}^{m+1}p_i\bm{s}_i(e)=\mu_e
\end{eqnarray}
Since our Max-Weight policy, \textbf{UMW}, maximizes  the RHS of the drift expression in Eqn. \eqref{drift_expr} from the set of all feasible policies $\Pi$, we can write 
\begin{eqnarray*}
\Delta^{\mathbf{UMW}}(t) &\leq& B + 2 \sum_{e \in E}\tilde{Q}_e(t)\mathbb{E}\big(A_e^\mathbf{RAND}(t)|\bm{\tilde{Q}}(t)\big)\nonumber \\
	&-& 2 \sum_{e \in E}\tilde{Q}_e(t)\mathbb{E}\big(\mu_e^\mathbf{RAND}(t)|\bm{\tilde{Q}}(t)\big)\\
	\end{eqnarray*}
\begin{eqnarray*}
&\stackrel{(a)}{=}& B+ 2\sum_{e \in E} \tilde{Q}_e(t)\bigg(\mathbb{E}A_e^\mathbf{RAND}(t)-\mathbb{E}\mu_e^\mathbf{RAND}(t) \bigg)\\
&\stackrel{(b)}{=}& B+2\sum_{e \in E} \tilde{Q}_e(t)\big(\lambda_e -\mu_e \big)\\
&\stackrel{(c)}{\leq}& B - 2\epsilon \sum_{e \in E} \tilde{Q}_e(t),
\end{eqnarray*}
where (a) follows from the fact that the randomized policy \textbf{RAND} is memoryless and hence, independent of the virtual queues $\bm{\tilde{Q}}(t)$, (b) follows from Eqns. \eqref{tot_arr} and \eqref{tot_ser} and finally (c) follows from Eqn. \eqref{diff}.\\
Taking expectation of both sides w.r.t. the virtual queue-lengths $\bm{\tilde{Q}}(t)$, we bound the expected drift at slot $t$ as 
\begin{eqnarray} \label{drift_t}
\mathbb{E}L\big(\tilde{\bm{Q}}(t+1)\big) - \mathbb{E}L\big(\tilde{\bm{Q}}(t)\big) \leq B - 2\epsilon \sum_{e \in E} \mathbb{E}(\tilde{Q}_e(t))
\end{eqnarray}
Summing Eqn. \eqref{drift_t} from $t=0$ to $T-1$ and remembering that $L(\bm{Q}(T)) \geq 0$ and $L(\bm{\tilde{Q}}(0))=0$, we conclude that 
\begin{eqnarray}
\frac{1}{T}\sum_{t=0}^{T-1}\sum_{e \in E} \mathbb{E}(\tilde{Q}_e(t)) \leq \frac{B}{2 \epsilon}
\end{eqnarray}
Taking $\limsup$ of both sides proves the claim. 
\end{proof}

\subsection{Proof of the Skorokhod Map Representation in Eqn. \eqref{skorokhod}} \label{skorokhod_proof}
\begin{proof}
	From the dynamics of the virtual queues in Eqn. \eqref{queue-evolution}, we have for any $t \geq 1$ 
	\begin{eqnarray} \label{recursion}
		\tilde{Q}_e(t) \geq \tilde{Q}_e(t-1) + A_e(t-1)-\mu_e(t-1).
	\end{eqnarray}

Iterating \eqref{recursion} $\tau$ times $1\leq \tau \leq t$, we obtain 
\begin{eqnarray*}
	\tilde{Q}_e(t) \geq \tilde{Q}_e(t-\tau) + A_e(t-\tau, t)-S_e(t-\tau, t),
\end{eqnarray*}

where $A_e(t_1,t_2)=\sum_{\tau=t_1}^{t_2-1} A_e(\tau) $ and $S_e(t_1,t_2)=\sum_{\tau=t_1}^{t_2-1} \mu_e(\tau)$, as defined before. Since each of the virtual-queue components are non-negative at all times (viz. \eqref{queue-evolution}), we have $\tilde{Q}_e(t-\tau)\geq 0$. Thus, \begin{eqnarray*}
	\tilde{Q}_e(t) \geq A_e(t-\tau, t)-S_e(t-\tau, t).
\end{eqnarray*}
Since the above holds for any time $1 \leq \tau \leq t$ and the queues are always non-negative, we obtain
\begin{eqnarray} \label{lb_skorokhod}
	\tilde{Q}_e(t) \geq \bigg(\sup_{1 \leq \tau \leq t}\big(A_e(t-\tau, t)-S_e(t-\tau, t)\big)\bigg)_+
\end{eqnarray}

To show that Eqn. \eqref{lb_skorokhod} holds with equality, we consider two cases. \\\\
\underline{\textbf{Case I: $\tilde{Q}_e(t)=0$} } \\
Since the RHS of Eqn. \eqref{lb_skorokhod}
 is non-negative, we immediately obtain equality throughout in Eqn \eqref{lb_skorokhod}. \\\\
 \underline{\textbf{Case II: $\tilde{Q}_e(t)>0$} }
Consider the \emph{latest time} $t-\tau', 1\leq \tau'\leq t$, prior to $t$, at which $\tilde{Q}_e(t-\tau')=0$. Such a time $t-\tau'$ exists because we assumed the system to start with empty queues at time $t=0$. Hence $Q_e(z) > 0$ throughout the time interval $z\in [t-\tau'+1, t]$. As a result, in this time interval the system dynamics for the virtual-queues \eqref{queue-evolution} takes the following form 
\begin{eqnarray*}
\tilde{Q}_e(z) = \tilde{Q}_e(z-1) + A_e(z-1)-\mu_e(z-1), 
\end{eqnarray*}
Iterating the above recursion in the interval $z \in [t-\tau'+1, t]$, we obtain
\begin{eqnarray} \label{ach_skorokhod}
\tilde{Q}_e(t)= A_e(t-\tau',t)-S_e(t-\tau',t)
\end{eqnarray}
We conclude the proof upon combining Eqns. \eqref{lb_skorokhod} and \eqref{ach_skorokhod}.
\end{proof}

\subsection{Proof of Lemma \ref{rate_stability}}\label{rate_stability_proof}

\begin{proof}
	We will establish this result by appealing to the \emph{Strong Stability Theorem} (Theorem 2.8) of \cite{neely2010stochastic}. For this, we first consider an associated system $\{\hat{\bm{Q}}(t)\}_{t \geq 0}$ with a slightly different queueing recursion, as considered in \cite{neely2010stochastic} (Eqn. 2.1, pp-15). For a given sequence $\{\bm{A}(t), \bm{\mu}(t)\}_{t \geq 0}$, define the following recursion for all $e \in E$,
	\begin{eqnarray}\label{assoc_eqn}
		\hat{Q}_e(t+1)&=& (\hat{Q}_e(t)-\mu_e(t))_+ + A_e(t),\\
		\hat{Q}_e(0)&=&0.\nonumber
	\end{eqnarray}
	Recall the dynamics of the virtual queues (Eqn. \eqref{queue-evolution}):
	\begin{eqnarray}\label{lind_eqn}
		\tilde{Q}_e(t+1)&=&(\tilde{Q}_e(t)+ A_e(t)- \mu_e(t))^+,\\
		\tilde{Q}_e(0)&=&0.\nonumber
	\end{eqnarray}
	We next prove the following proposition:  
	\begin{proposition} \label{comp_propo}
	For all $e \in E$
	\begin{eqnarray*} 
	A_{\max}+ \tilde{Q}_e(t) \stackrel{(*)}{\geq} \hat{Q}_e(t) \stackrel{(**)}{\geq} \tilde{Q}_e(t), \hspace{10pt} \forall t\geq 0.	
	\end{eqnarray*}
\end{proposition}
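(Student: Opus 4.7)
The plan is to prove both inequalities simultaneously by induction on $t \geq 0$, using the fact that per-slot arrivals to any virtual queue are bounded, specifically $A_e(t) \leq A_{\max}$ (since $A_e(t) = \sum_{c} A^{(c)}(t)\mathds{1}(e \in T^{(c)}(t)) \leq \sum_c A^{(c)}(t) \leq A_{\max}$ by the problem's bounded-arrival assumption). The base case $t=0$ is immediate because $\hat{Q}_e(0) = \tilde{Q}_e(0) = 0$, so both (*) and (**) hold trivially.

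For the inductive step on inequality $(**)$, I would assume $\hat{Q}_e(t) \geq \tilde{Q}_e(t)$ and drop the outer $(\cdot)^+$ on the right-hand side of the $\hat{Q}$-recursion to write $\hat{Q}_e(t+1) \geq \hat{Q}_e(t) - \mu_e(t) + A_e(t) \geq \tilde{Q}_e(t) + A_e(t) - \mu_e(t)$. Since $\hat{Q}_e(t+1) \geq A_e(t) \geq 0$ as well, we obtain $\hat{Q}_e(t+1) \geq \bigl(\tilde{Q}_e(t) + A_e(t) - \mu_e(t)\bigr)^+ = \tilde{Q}_e(t+1)$, which closes the induction.

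For the inductive step on inequality $(*)$, I would assume $\hat{Q}_e(t) \leq \tilde{Q}_e(t) + A_{\max}$ and, by monotonicity of $(\cdot)^+$, write
\[
\hat{Q}_e(t+1) \leq \bigl(\tilde{Q}_e(t) + A_{\max} - \mu_e(t)\bigr)^+ + A_e(t).
\]
The target is to show this is at most $\tilde{Q}_e(t+1) + A_{\max} = \bigl(\tilde{Q}_e(t) + A_e(t) - \mu_e(t)\bigr)^+ + A_{\max}$. Setting $x = \tilde{Q}_e(t) - \mu_e(t)$, this reduces to the elementary inequality $(x + A_{\max})^+ + A_e(t) \leq (x + A_e(t))^+ + A_{\max}$, valid whenever $0 \leq A_e(t) \leq A_{\max}$. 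I would verify this by splitting on the sign of $x + A_e(t)$: if $x + A_e(t) \geq 0$, the RHS equals $x + A_e(t) + A_{\max}$ while the LHS is at most that; if $x + A_e(t) < 0$, the RHS equals $A_{\max}$ and the LHS is at most $\max(0, x+A_{\max}) + A_e(t) \leq A_{\max}$, either by bounding $x + A_{\max} < A_{\max} - A_e(t)$ in the positive case or by noting $A_e(t) \leq A_{\max}$ in the negative case.

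The main obstacle is purely bookkeeping around the two different placements of the $(\cdot)^+$ operator in the two recursions; the left inequality in particular requires the case analysis above because one cannot simply ``commute'' the positive-part with the additive perturbation. Once established, Proposition \ref{comp_propo} says $\hat{Q}_e$ and $\tilde{Q}_e$ differ by at most the constant $A_{\max}$, so Theorem \ref{stability_theorem} (strong stability of $\tilde{Q}_e$) transfers immediately to $\hat{Q}_e$, allowing us to invoke Neely's Strong Stability Theorem in its standard form and conclude rate-stability of $\tilde{Q}_e$, which is the statement of Lemma \ref{rate_stability}.
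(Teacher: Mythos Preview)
Your proposal is correct and follows essentially the same inductive approach as the paper: the argument for $(**)$ is identical to the paper's, and for $(*)$ you supply the case analysis that the paper omits with the remark ``may also be carried out similarly.'' Your observation that $A_e(t)\leq A_{\max}$ is exactly the ingredient needed to close the $(*)$ induction, and your summary of how the proposition feeds into Lemma~\ref{rate_stability} matches the paper's use of it.
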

	\begin{proof}
	We first prove the second inequality (**) by inducting on time.\\ 
	\underline{\textbf{Base Step $t=0$}}:\\
	Holds with equality since $\hat{Q}_e(0)=\tilde{Q}_e(0)=0$. 
	\underline{\textbf{Induction Step}}:\\
	Assume that $\hat{Q}_e(t) \geq \tilde{Q}_e(t)$ for some $t \geq 0$. From the dynamics \eqref{assoc_eqn}, we can write 
	\begin{eqnarray*}
		\hat{Q}_e(t+1)&=&\max\big( \hat{Q}_e(t)-\mu_e(t)+A_e(t), A_e(t)\big)\\
		&\stackrel{(a)}{\geq} & \max\big( \hat{Q}_e(t)-\mu_e(t)+A_e(t), 0\big)\\
		& \stackrel{(b)}{\geq}&\max \big(\tilde{Q}_e(t)-\mu_e(t)+A_e(t),0\big)\\
		&\stackrel{(c)}{=}&\tilde{Q}_e(t+1), 
	\end{eqnarray*}
	where Eqn. (a) follows from the fact that $A_e(t) \geq 0$, Eqn. (b) follows from the induction assumption and Eqn. (c) follows from the dynamics \eqref{lind_eqn}. This completes the induction step and the proof of the second inequality (**) of the proposition. Proof of the first inequality (*) may also be carried out similarly. 
	\end{proof}
	Taking expectation throughout the first inequality (*) of Proposition \ref{comp_propo} for any $e \in E$, we have for each $t\geq 0$
	\begin{eqnarray*}
		\mathbb{E}(\hat{Q}_e(t))\leq  \mathbb{E}(\tilde{Q}_e(t)) + A_{\max}
	\end{eqnarray*} 
	Thus,
	\begin{eqnarray*}
		\limsup_{T\to \infty} \frac{1}{T}\sum_{t=0}^{T-1}\mathbb{E}(\hat{Q}_e(t))&\leq& \limsup_{T\to \infty} \frac{1}{T}\sum_{t=0}^{T-1}\mathbb{E}(\tilde{Q}_e(t)) + A_{\max}\\
		&\stackrel{(a)}{<} & \infty,
	\end{eqnarray*}
	where (a) follows from the strong stability of the virtual queues under \textbf{UMW}. This shows that, the associated queue-process $\{\hat{\bm{Q}}(t)\}_{t\geq 0}$ is also strongly stable under \textbf{UMW}.\\ 
	 Since the total external arrival $A(t)=\sum_{e}A_e(t)$ at slot $t$ is assumed to be bounded w.p. $1$, applying Theorem 2.8, part (b) of \cite{neely2010stochastic}, we conclude that for any $e \in E$ 
	\begin{eqnarray*}
		\lim_{t \to \infty} \frac{\hat{Q}_e(t)}{t}=0, \hspace{10pt} \mathrm{w.p.} 1
	\end{eqnarray*}
	Using the second inequality (**) of Proposition \ref{comp_propo} and the non-negativity of the virtual queues, we conclude that for any $e \in E$ 
	\begin{eqnarray*}
		\lim_{t \to \infty} \frac{\tilde{Q}_e(t)}{t}=0, \hspace{10pt} \mathrm{w.p.} 1
\end{eqnarray*}
Finally, using the union bound we conclude that 
	\begin{eqnarray*}
		\lim_{t \to \infty} \frac{\tilde{Q}_e(t)}{t}=0, \hspace{5pt} \forall e \in E \hspace{10pt} \mathrm{w.p.} 1
\end{eqnarray*}
\end{proof}

\subsection{Proof of Theorem \ref{physical_stability-theorem} } \label{physical_stability-proof}
Throughout this proof, we will fix a sample point $\omega \in \Omega$, giving rise to a sample path satisfying the condition \eqref{bounded_arrival}. All random processes \footnote{Recall that, a discrete-time integer-valued random process $\bm{X}(\omega;t)$ is a measurable map from the sample space $\Omega$ to the set of all integer-sequences $\mathbb{Z}^{\infty}$ \cite{durrett2010probability}, i.e., $\bm{X}: \Omega \to \mathbb{Z}^{\infty}$. } will be evaluated at this sample path. For the sake of notational simplicity, we will drop the argument $\omega$ for evaluating any random variable $\bm{X}$ at the sample point $\omega$, e.g., the deterministic sample-path $X(\omega, t)$ will be simply denoted by $X(t)$. We now establish a simple analytical result which will be useful in the main proof of the theorem:
\begin{framed}
\begin{lemma} \label{seq_lemma}
 Consider a non-negative function $\{F(t), t \geq 1\}$ defined on the set of natural numbers, such that $F(t)=o(t)$ . Define $M(t)= \sup_{0\leq \tau \leq t} F(\tau)$.  Then\\
 1. $M(t)$ is non-decreasing in $t$.\\
 2. $M(t)=o(t)$
\end{lemma}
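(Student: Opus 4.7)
My plan is to treat the two claims separately and use only elementary real-analysis arguments, since this lemma is purely a deterministic statement about a non-negative sequence.

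For part 1, I would simply observe that $M(t) = \sup_{0 \le \tau \le t} F(\tau)$ is a supremum taken over a nested family of sets: as $t$ increases by one, the index set $\{0,1,\dots,t\}$ only grows, so the supremum cannot decrease. This gives monotonicity in one line.

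For part 2, the natural approach is an $\varepsilon$-splitting argument. Fix an arbitrary $\varepsilon > 0$. Because $F(t) = o(t)$, there exists some threshold $T_0 = T_0(\varepsilon)$ such that $F(\tau) < \varepsilon \tau$ for every $\tau \ge T_0$. For any $t > T_0$, I would split the supremum as
\[
M(t) = \max\!\Bigl(\ \sup_{0 \le \tau \le T_0} F(\tau)\ ,\ \sup_{T_0 < \tau \le t} F(\tau)\ \Bigr).
\]
The first term is a finite constant $C = C(\varepsilon)$ because it is the supremum of a non-negative function over a finite set. The second term is bounded by $\sup_{T_0 < \tau \le t} \varepsilon \tau \le \varepsilon t$. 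Hence $M(t) \le C + \varepsilon t$, and dividing by $t$ and letting $t \to \infty$ gives $\limsup_{t\to\infty} M(t)/t \le \varepsilon$. Since $\varepsilon$ was arbitrary and $M(t) \ge 0$, this forces $\lim_{t\to\infty} M(t)/t = 0$, i.e.\ $M(t) = o(t)$.

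There is really no significant obstacle here; the only subtle point worth flagging is that the constant $C$ depends on $\varepsilon$ (through $T_0$), but that is harmless because $C/t \to 0$ for any fixed $\varepsilon$ before we let $\varepsilon \downarrow 0$. The argument is standard and will fit in a few lines.
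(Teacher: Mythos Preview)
Your proof is correct. Part~1 matches the paper's one-line observation. For part~2, however, you take a genuinely different route: the paper splits into two cases (if $F$ is bounded the claim is trivial; if $F$ is unbounded it introduces the sequence $\{r_k\}$ of \emph{record times} at which $F$ attains a new running maximum, observes that $M(t)=F(r_k)$ for the latest record $r_k\le t$, and bounds $M(t)/t\le F(r_k)/r_k$, which tends to zero because $r_k\to\infty$). Your $\varepsilon$-splitting argument is more direct: it avoids the bounded/unbounded case distinction and the records construction entirely, handling everything uniformly by separating the finite prefix $[0,T_0]$ from the tail where $F(\tau)<\varepsilon\tau$. What the paper's approach buys is a slightly sharper structural insight (the running maximum is always a value of $F$ at a record time), but your argument is shorter and arguably cleaner for the purpose at hand.
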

\end{framed}
\begin{proof}
That $M(t)$ is non-decreasing follows directly from the definition of $M(t)=\sup_{0\leq \tau \leq t}F(t)$. We now prove the claim (2).\\
\textbf{Case I: The function $F( t)$ is bounded}\\
In this case, the function $M(t)$ is also bounded and the claim follows immediately.\\
\textbf{Case II: The function $F(t)$ is unbounded}\\
Define the subsequence $\{r_k\}_{k\geq 1}$, corresponding to the time of maximums of the function $M(t)$ up to time $t$. Formally the sequence $\{r_k\}_{k\geq 1}$ is defined recursively as follows,
\begin{eqnarray}
r_1&=&1\\
r_{k}&=& \{ \min t > r_{k-1} : F(t) > \max_{\tau \leq t-1}  F( \tau ) \}
\end{eqnarray}
Since the function $F( t)$ is assumed to be unbounded, we have $r_k \to \infty$ as $k \to \infty$. In the literature \cite{glick1978breaking}, the sequence $\{r_k\}$ is also known as the sequence of \emph{records} of the function $F( t)$. With this definition, for any $t \geq 1$ and for  $r_k \leq t$ corresponding to the latest record up to time $t$, we readily have
\begin{eqnarray}
M(t) = F(r_k)
\end{eqnarray}
Hence, 
\begin{eqnarray}
\frac{M(t)}{t} = \frac{F( r_k)}{t} \stackrel{(a)}{\leq} \frac{F( r_k)}{r_k},
\end{eqnarray}
where Eqn. (a) follows from the fact that $r_k \leq t$. Thus for any sequence of natural numbers $\{t_i\}_{1}^{\infty}$, we have a corresponding sequence  $\{r_{k_i}\}_{i=1}^{\infty}$ such that for each $i$, we have
\begin{eqnarray*}
\frac{M(t_i)}{t_i} = \frac{F(r_{k_i})}{t} \stackrel{(a)}{\leq} \frac{F( r_{k_i})}{r_{k_i}}
\end{eqnarray*}
This implies, 
\begin{eqnarray} \label{record}
\limsup_{t \to \infty} \frac{M( t)}{t} \leq \limsup_{t \to \infty} \frac{F( t)}{t}\stackrel{(b)}{=} 0, 
\end{eqnarray}
where Eqn (b) follows from our hypothesis on the function $F(t)$. Also since  $M(t)\geq F( t)$, from Eqn. \eqref{record} we conclude that 
\begin{eqnarray}
\lim_{t \to \infty} \frac{M( t)}{t} =0 
\end{eqnarray}

\end{proof}

As a direct consequence of Lemma \ref{seq_lemma} and the property of the sample-point $\omega$ under consideration, we have:
\begin{framed}
\begin{eqnarray} \label{arr_cond}
 A_e(t_0,t) \leq S_e(t_0,t) + M(t), \hspace{10pt} \forall e \in E, \hspace{3pt}\forall t_0 \leq t
\end{eqnarray}
\end{framed}
for some non-decreasing non-negative function $M(t)=o(t)$. Equipped with Eqn. \eqref{arr_cond}, we return to the proof of the Theorem \ref{physical_stability-theorem}. \\
\begin{framed}
\begin{proposition}
 \textsf{ENTO} is rate-stable.
 \end{proposition}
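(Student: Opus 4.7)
My plan is to adapt Gamarnik's argument for the Nearest-To-Origin policy to the sample-path loading condition \eqref{arr_cond}, proving rate-stability by induction on the hop index. Stratify the physical backlog at each edge by hop number: let $N^{(k)}_{e}(t)$ denote the number of packets presently in queue $e$ that are at their $k$-th hop along the UMW-assigned route, and set $N^{(\leq k)}_{e}(t)=\sum_{j=1}^{k}N^{(j)}_{e}(t)$. Because UMW routes are trees in $\mathcal{G}$, no packet ever sits at a hop greater than $n-1$, so it suffices to prove by induction on $k$ that $\max_{e} N^{(\leq k)}_{e}(t)=o(t)$ almost surely; summing over $k$ and $e$ then yields $\sum_{e} Q_{e}(t)=o(t)$.

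For the base case $k=1$, fix $e$ and $t$, and let $\tau$ be the last instant $\leq t$ with $N^{(1)}_{e}(\tau)=0$ (take $\tau=0$ if no such instant exists). Throughout $(\tau,t]$ the queue retains a hop-1 packet, and under \textbf{ENTO} these packets enjoy strict highest priority, so every activation of $e$ in this interval discharges a hop-1 packet. Each hop-1 physical arrival at $e$ coincides with an external injection whose chosen route begins with $e$, which is also a virtual arrival at $e$ in the same slot, hence $A^{(1),\mathrm{phys}}_{e}(\tau,t)\leq A_{e}(\tau,t)$. Combining this with \eqref{arr_cond} gives
$$ N^{(1)}_{e}(t)\leq A_{e}(\tau,t)-S_{e}(\tau,t)+O(1)\leq M(t)+O(1)=o(t). $$

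For the inductive step, assume $\max_{e'} N^{(\leq k-1)}_{e'}(s)=o(s)$, and define $\tau$ as the latest time $\leq t$ at which $N^{(\leq k)}_{e}=0$. The same priority argument shows that every service at $e$ during $(\tau,t]$ is consumed by a packet of hop index $\leq k$, so $N^{(\leq k)}_{e}(t)\leq A^{(\leq k),\mathrm{phys}}_{e}(\tau,t)-S_{e}(\tau,t)+O(1)$. Each packet counted in $A^{(\leq k),\mathrm{phys}}_{e}(\tau,t)$ entered the network at some time $\sigma$ and thereafter traversed at most $k-1$ upstream queues, where under \textbf{ENTO} it was delayed only by packets of priority $\leq k-1$. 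The inductive hypothesis together with Lemma~\ref{seq_lemma} (applied edge-wise and then to the maximum over edges) produces a sublinear shift $G(t)=o(t)$ with $\sigma \geq \tau-G(t)$, so that $A^{(\leq k),\mathrm{phys}}_{e}(\tau,t)\leq A_{e}\bigl((\tau-G(t))^{+},t\bigr)$. Applying \eqref{arr_cond} on this enlarged interval and using $S_{e}(\tau-G(t),\tau)\leq G(t)$ yields
$$ N^{(\leq k)}_{e}(t)\leq M(t)+G(t)+O(1)=o(t), $$
closing the induction.

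The principal obstacle lies in the sojourn-time bound used in the inductive step. Unlike classical adversarial queueing theory, where the load is bounded by a constant strictly less than one, the condition \eqref{arr_cond} only provides a sublinear slack $M(t)=o(t)$, so sojourn times cannot be controlled by instantaneous backlogs alone; they must be tracked through running suprema of the hop-stratified occupancies, which is exactly what Lemma~\ref{seq_lemma} is designed to handle. A secondary subtlety is packet duplication in broadcast and multicast: when a packet splits at a branching node of its tree, each downstream copy must be counted as an independent hop-$k$ descendant, but each such copy corresponds to exactly one virtual arrival at each remaining edge of its subtree, so the accounting through \eqref{arr_cond} remains consistent.
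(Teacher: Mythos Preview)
Your overall architecture---stratifying the physical backlog by hop index and inducting on $k$---matches the paper, and your base case is fine. The gap is in the inductive step, specifically in the sojourn-time claim. You assert that the hypothesis $\max_{e'}N^{(\le k-1)}_{e'}(s)=o(s)$, passed through Lemma~\ref{seq_lemma}, yields a sublinear shift $G(t)$ with $\sigma\ge\tau-G(t)$ for every packet that reaches $e$ at hop $\le k$ during $(\tau,t]$. This does not follow: a sublinear bound on the \emph{occupancy} at an upstream edge says nothing about the \emph{waiting time} there, because the activation process $\mu_{e'}(\cdot)$ is otherwise arbitrary subject only to~\eqref{arr_cond}, and Lemma~\ref{seq_lemma} merely converts a pointwise $o(t)$ bound into a running-supremum $o(t)$ bound---it does not manufacture a delay estimate. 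For a concrete obstruction, take a two-hop route $e'\to e$, inject one packet at each time $2^{k}$, activate $e'$ only at times $2^{k+1}-1$, and activate $e$ every slot. Condition~\eqref{arr_cond} holds with $M(t)=O(1)$ and the hop-$1$ backlog at $e'$ never exceeds $1$, yet the packet born at $2^{k}$ waits $\Theta(2^{k})$ at $e'$. Choosing $t\approx 2^{k+1}$ gives $\tau\approx t-1$ and $\sigma=2^{k}\approx t/2$, so any admissible $G$ must satisfy $G(t)\ge t/2-1$; in particular your key inequality $A^{(\le k),\mathrm{phys}}_{e}(\tau,t)\le A_{e}\bigl((\tau-G(t))^{+},t\bigr)$ fails for every $G(t)=o(t)$.

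The paper avoids any sojourn-time estimate. Instead of tracing each hop-$\le k$ physical arrival at $e$ back to its external injection time, it observes that any such packet was, at the start $t_{0}$ of the busy period, either (i) already somewhere in the network at a layer $j<k$, and hence counted in the \emph{global} layer-$<k$ content $\sum_{j<k}R_{j}(t_{0})\le\sum_{j<k}B_{j}(t_{0})$ supplied by the induction hypothesis, or (ii) injected externally during $[t_{0},t]$. The identical dichotomy bounds the number of higher-priority (layer-$<k$) packets that can steal activations of $e$ from layer $k$ over the busy period. Combining the two pieces and applying~\eqref{arr_cond} once on $[t_{0},t]$ gives $B_{k}(t)=2|E|\sum_{j<k}B_{j}(t)+|E|M(t)$. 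The point is that the induction hypothesis should be used as a \emph{snapshot bound on network-wide content at time $t_{0}$}, not as a device for controlling per-hop delays; with that replacement your argument becomes essentially the paper's.
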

\end{framed}
\begin{proof}
We generalize the argument by Gamarnik \cite{Gamarnik} to prove the proposition. We remind the reader that we are analyzing the time-evolution of a fixed sample point $\omega \in \Omega$, which satisfies Eqn. \eqref{arr_cond}. \\
Let $R_e(0)$ denote the total number of packets waiting to cross the edge $e$ at time $t=0$. Also, let $R_k(t)$ denote the total number of packets at time $t$, which are \emph{exactly} $k$ hops away from their respective sources. Such packets will be called ``layer $k$" packets in the sequel. If a packet is duplicated along its assigned route $T$ (which is, in general, a tree), each copy of the packet is counted separately in the variable $R_k(t)$, \emph{i.e.,}
\begin{eqnarray} \label{r-k}
R_k(t)=\sum_{T \in \mathcal{T}} R_{(e^T_k, T)}(t), 
\end{eqnarray} 
where the variable $R_{(e,T)}(t)$ denotes the number of packets following the routing tree $T$, that are waiting to cross the edge $e \in T$ at time $t$. The edge $e^T_k$ is an edge located $k$\textsuperscript{th} hop away from the source in the tree $T$. If there are more than one such edge (because the tree $T$ has more than one branch), we include all these edges in the summation \eqref{r-k}.
We show by induction that $R_k(t)$ is \emph{almost surely} bounded by a function, which is $o(t)$.\\
\textbf{Base Step} $k=0$: Fix an edge $e$ and time $t$. Let $t_0\leq t$ be the largest time at which no packets of layer $0$ (packets which have not crossed any edge yet) were waiting to cross $e$. If no such time exists, set $t_0=0$. Hence, the total number of layer $0$ packets waiting to cross the edge $e$ at time $t_0$ is at most $Q_e( 0)$. During the time interval $[t_0,t]$, as a consequence of the \textbf{UMW} control policy \eqref{arr_cond}, at most $S_e(t_0,t)+M(t)$ external packets have been admitted to the network, that want to cross the edge $e$ in future. Also, by the choice of the time $t_0$, the edge $e$ was always having packets to transmit during the entire time interval $[t_0,t]$. Since \textsf{ENTO} scheduling policy is followed, layer $0$ packets have priority over all other packets.  Hence, it follows that the total number of packets at the edge $e$ at time $t$ satisfies 
\begin{eqnarray}
\sum_{T: e \in e^{T}_0}R_{(e,T)} (t) &\leq& R_e(0) + S_e(t_0,t)+M(t) - S_e(t_0,t) \nonumber \\
&\leq&  R_e(0)+ M(t) 
\end{eqnarray}
As a result, we have $R_0(t) \leq \sum_e R_e(0) + |E|M(t)$, for all $t$. Let $B_0(t)\stackrel{\text{def}}{=}\sum_e R_e(0) + |E|M(t)$. Since $M(t)=o(t)$, we have $B_0(t)=o(t)$. Note that, since $M(t)$ is monotonically non-decreasing by definition, so is $B_0(t)$. 

\textbf{Induction Step:}
Suppose that, for some monotonically non-decreasing functions $B_j(t)=o(t), j=0,1,2,\ldots, k-1$, we have $ R_j(t) \leq B_j(t)$, for all time $t$. We next show that $ R_k(t) \leq B_k(t)$ for all $t$, where $B_k(t)=o(t)$. \\
Again, fix an edge $e$ and an arbitrary time $t$. Let $t_0\leq t$ denote the largest time before $t$, such that there were no layer $k$ packets waiting to cross the edge $e$. Set $t_0=0$ if no such time exists. Hence the edge $e$ was always having packets to transmit during the time interval $[t_0,t]$ (packets in layer $k$ or lower). The layer $k$ packets that wait to cross edge $e$ at time $t$ are composed only of a subset of packets which were in layers $0\leq j \leq k-1$ at time $t_0$ or packets that arrived during the time interval $[t_0, t]$ and have edge $e$ as \emph{one of their $k$\textsuperscript{th} edge} on the route followed. By our induction assumption, the first group of packets has a size bounded by $\sum_{j=0}^{k-1} B_j(t_0)\leq \sum_{j=0}^{k-1}B_j(t)$, where we have used the fact (from our previous induction step) that the functions $B_j(\cdot)$'s are monotonically non-decreasing. The size of the second group of packets is given by $\sum_{T: e \in e^{T}_k}A_T(t_0,t)$. We next estimate the number of layer $k$ packets that crossed the edge $e$ during the time interval $[t_0,t]$. Since \textsf{ENTO} policy is used, layer $k$ packets were not processed only when there were packets in layers up to $k-1$ that wanted to cross $e$. The number of such packets is bounded by $\sum_{j=0}^{k-1}B_j(t_0)\leq \sum_{j=0}^{k-1}B_j(t)$, which denotes the total possible number of packets in layers up to $k-1$ at time $t_0$, plus $\sum_{j=0}^{k-1} \sum_{T: e \in e^T_j}A_T(t_0,t)$, which is the number of new packets that arrived in the interval $[t_0,t]$ and intend to cross the edge $e$ within first $k-1$ hops. Thus, we conclude that at least 
\begin{eqnarray}
\max \bigg\{0, S_e(t_0,t) - \sum_{j=0}^{k-1}B_j(t) - \sum_{j=0}^{k-1}\sum_{T: e \in e_j^{T}} A_T(t_0,t) \bigg \}
\end{eqnarray}  
packets of layer $k$ crossed $e$ during the time interval $[t_0,t]$. Hence, 
\begin{eqnarray*}
&\sum_{T: e \in e^{T}_k}\hspace{-10pt}& R_{(e,T)}(t) \leq \sum_{j=0}^{k-1} B_j(t) + \sum_{T: e \in e^{T}_k}A_T(t_0,t) \\
&-& \big(S_e(t_0,t) - \sum_{j=0}^{k-1}B_j(t) - \sum_{j=0}^{k-1}\sum_{T: e \in e_j^{T}} A_T(t_0,t)\big)\\
&=& 2\sum_{j=0}^{k-1}B_j(t) + \sum_{j=0}^{k}\sum_{T: e \in e_j^{T}} A_T(t_0,t) - S_e(t_0,t) \\
& \stackrel{(a)}{\leq} & 2\sum_{j=0}^{k-1}B_j(t) + M(t),
\end{eqnarray*}
where Eqn. (a) follows from the arrival condition \eqref{arr_cond}. Hence the total number of layer $k$ packets at time $t$ is bounded by 
\begin{eqnarray}
R_k(t) \leq 2|E|\sum_{j=0}^{k-1}B_j(t) + M(t)|E|
\end{eqnarray}
Define $B_k(t)$ to be the RHS of the above equation, i.e. 
\begin{eqnarray}\label{bk}
B_k(t) \stackrel{(\text{def})}{=}  2|E|\sum_{j=0}^{k-1}B_j(t) + M(t)|E|
\end{eqnarray} 
Using our induction assumption and Eqn. \eqref{bk}, we conclude that $B_k(t)=o(t)$ and it is monotonically non-decreasing. This completes the induction step. \\
To conclude the proof of the proposition, notice that total size of the physical queues at time $t$ may be alternatively written as 
\begin{eqnarray}\label{sumQ}
\sum_{e \in E}{Q}_e(t)=\sum_{k=1}^{n-1} R_k(t)
\end{eqnarray}
Since the previous inductive argument shows that for all $k$, we have $R_k(t) \leq B_k(t)$ where $B_k(t)=o(t)$ \emph{a.s.}, we conclude that
\begin{eqnarray}
\lim_{t \to \infty} \frac{\sum_{e \in E}{Q}_e(t)}{t}=0, \hspace{15pt} \text{w.p. } 1,
\end{eqnarray}
This implies that the physical queues are rate stable \cite{neely2010stochastic}, jointly under the operation of \textbf{UMW} and \textsf{ENTO}. 
\end{proof}

\end{document}